\newtheorem{theorem}{Theorem}[section]
\newtheorem{proposition}{Proposition}[section]
\newtheorem{lemma}{Lemma}[section]
\newtheorem{remark}{Remark}[section]
\newcommand{\R}{\mathbb{R}}
\newcommand{\ity}{\infty}
\newcommand{\f}{\displaystyle\frac}
\begin{document}
\title[Asymptotic profiles for $\sigma$-evolution equations with double damping]{Sharp higher-order $L^2$-asymptotic expansion of solutions to $\sigma$-evolution equations with different damping types}

\subjclass{35B40, 35L15, 35L30}
\keywords{Asymptotic expansion, double damping terms, fractional Laplacian, $\sigma$-evolution equations}
\thanks{$^* $\textit{Corresponding author:} Tuan Anh Dao (anh.daotuan@hust.edu.vn)}

\maketitle
\centerline{\scshape Dinh Van Duong$^{1}$, Tuan Anh Dao$^{1, *}$}
\medskip
{\footnotesize
	\centerline{$^1$ Faculty of Mathematics and Informatics, Hanoi University of Science and Technology}
	\centerline{No.1 Dai Co Viet road, Hanoi, Vietnam}}
\medskip

\begin{abstract}
   In this paper, our main goal is to achieve the sharp higher-order asymptotic expansion of solutions to $\sigma$-evolution equations with different damping types in the $L^2$ framework by effectively using Taylor series expansion combined with Fa\`a di Bruno's formula for higher-order derivatives of certain specific functions. In particular, we observe the influence of ``parabolic-like models'' corresponding to $\sigma_1 \in [0, \sigma/2)$ and ``$\sigma$-evolution like models'' corresponding to $\sigma_2 \in (\sigma/2, \sigma]$ on the higher-order asymptotic behavior of solutions. 
\end{abstract}

% \linenumbers
\tableofcontents

%====================================================================================
%=================================================================================={Introduction}	
\section{Introduction}
Let us consider the following Cauchy problem for linear $\sigma$-evolution equations with double damping:
\begin{equation} \label{Main.Eq.1}
\begin{cases}
u_{tt}+ (-\Delta)^\sigma u+ (-\Delta)^{\sigma_1} u_t+ (-\Delta)^{\sigma_2} u_t= 0, &\quad x\in \R^n,\, t > 0, \\
u(0,x)= u_0(x),\quad u_t(0,x)= u_1(x), &\quad x\in \mathbb{R}^n, \\
\end{cases}
\end{equation}
where $\sigma \geq 1$ is assumed to be any fractional number and $0\leq \sigma_1< \sigma/2< \sigma_2 \leq \sigma$. The operator $(-\Delta)^{\sigma}$ stands for the fractional Laplacian with the $\sigma$-th power, well-known as high-order waves, where the parameter $\sigma$ may be non-integer. In the non-integer case, it is determined by
$$(-\Delta)^{\sigma} \psi (x) = \mathfrak{F}^{-1}_{\xi \to x}\left(|\xi|^{2\sigma} \widehat{\psi}(\xi)\right)(x), $$
where $\psi \in H^{2\sigma}(\mathbb{R}^n)$  and $\widehat{\psi}$ denotes the Fourier transform of a function $\psi$. The term $(-\Delta)^{\theta} u_t$ for some $\theta \geq 0$ is called a damping term, specifically, it is often referred to the frictional (or external) damping, the parabolic like damping, the $\sigma$-evolution like damping and the visco-elastic (or strong) damping when $\theta = 0$, \,$\theta = \sigma_1 \in (0, \sigma/2)$,\, $\theta = \sigma_2 \in (\sigma/2, \sigma)$ and $\theta = \sigma$, respectively. \medskip

%\textcolor{red}%{\begin{itemize}
  %  \item Đầu tiên, em nhắc lại 2 kiểu mô hình xuất hiện trong PT (1) bao gồm ``parabolic'' khi $\sigma_1 \in [0,\sigma/2)$ và $\mu_2=0$, ``$\sigma$-evolution'' khi $\sigma_2\in (\sigma/2,\sigma]$ và $\mu_1=0$ là gì?
 %   \item Em giới thiệu các kết quả cho PT (1) với $\sigma_1=0$ và $\sigma_2=\sigma$. Bắt đầu từ trường hợp $\sigma=1$, họ đã làm được những gì (Ikehata, Swanda, Takeda, Reissig, Kainanne, ... ). Sau đó, trường hợp tổng quát $\sigma\ge 1$, họ đã làm được gì. Đặc biệt, họ đã có kết quả về khai triển dáng điệu tiệm cận bậc cao của nghiệm chưa (Dao, Michihisa, Reissig, Kainane, ... )?
  %  \item Tiếp đến, phân tích về các kết quả cho PT (1) với $\sigma_1 \in [0,\sigma/2)$ và $\sigma_2\in (\sigma/2,\sigma]$ (chính là bài báo của 3 thầy trò) đã có kết quả là gì? Nhấn mạnh về dáng diệu tiệm cận bậc 1 thì chỉ được mô tả bởi tham số $\sigma_1$ không thấy sự ảnh hưởng của $\sigma_2$ mà chỉ thấy được $\sigma_2$ tác động đến tính chính quy. Sau đó, xoáy sâu vào cái mới của bài báo này là chỉ rõ được sự ảnh hưởng thực sự của $\sigma_2$ ngoài tính chính quy để mô tả dáng điệu tiệm cận của nghiệm như thế nào? Nó chính là khai triển tiệm cận bậc cao thì mới thấy được điều này.
%\end{itemize}}
To get started, we now mention some historical views in terms of studying the following problem, which becomes the main inspiration to consider \eqref{Main.Eq.1}:
\begin{align}
\begin{cases}
    u_{tt} +(-\Delta )^{\sigma} u+ (-\Delta)^{\theta} u_t = 0, &x \in \mathbb{R}^n,\, t > 0, \\
    u(0,x) = u_0(x), \quad u_t(0,x) = u_1(x), &x \in \mathbb{R}^n.
    \end{cases}\label{Eq.2}
\end{align}
where $\theta \in [0, \sigma]$. The most famous case of this problem is when $\sigma = 1$, which has been studied by many authors for a long time (for example, \cite{Matsumura1976, Michihisa2021, Shibata2000, DabbiccoReissig2014, Takeda2015, Ikehata2021, KawakamiTakeda2016}). Matsumura \cite{Matsumura1976} was the first to establish some basic decay estimates for the problem (\ref{Eq.2}) with $(\sigma, \theta) = (1, 0)$, also well-known as the damped wave equation, and concluded that the damped wave equation has a diffusive structure as $t \to \infty$. In recent years, many mathematicians have focused on studying a typical nonlinear problem of (\ref{Eq.2}) for $(\sigma, \theta) = (1, 0)$ with the right-hand side replaced by the nonlinear term $|u|^p$. Todorova-Yordanov \cite{TodorovaYordanov2001} verified the critical exponent of this problem, which is $p_{\rm c}= 1+2/n$. Here, the critical exponent $p_{\rm c}$ is understood as follows: If the exponent $p$ of the nonlinear term  $|u|^p$ satisfies $p > p_{\rm c}$, then the problem has a unique small data global (in time) solution, moreover, if the exponent $p$ satisfies $p < p_{\rm c}$, then the problem does not have any global (in time) solution. Additionally, the higher-order asymptotic behavior of solutions in $L^q$ framework for some $q\in (1,\ity)$ has also been presented in the papers \cite{Michihisa2021, Takeda2015, KawakamiTakeda2016} in which they concluded the approximation of
solutions by the Gauss kernel. For $\sigma = 1$ and $\theta \in (0,1)$, there are a series of papers devoting to (\ref{Eq.2}) in this case, for instance \cite{DabbiccoReissig2014, IkehataTakeda2019, DAbbiccoEbert2014, NarazakiReissig2013}. Namely, D'Abbicco-Reissig in \cite{DabbiccoReissig2014} observed the differences in the solution properties between \textit{parabolic type models} corresponding to $\theta \in (0, 1/2)$ and \textit{hyperbolic type models} corresponding to $\theta \in (1/2, 1)$. Next, D'Abbicco-Ebert in \cite{DAbbiccoEbert2014} described the asymptotic behavior of solutions to (\ref{Eq.2}) with parabolic type models in the $L^m-L^q$ framework with $1 \leq m \leq q \leq \infty$. However, the used method in their proofs seems not applicable to hyperbolic type models which was investigated in \cite{IkehataTakeda2019} afterwards. More precisely, the authors in the cited papers clearly classified the asymptotic profiles of solutions to (\ref{Eq.2}) according to each specific model and they explicitly stated the asymptotic behavior of the higher-order derivatives of solutions. As a result, optimal decay estimates for solutions can be demonstrated from the viewpoint of the higher order derivatives in the $L^2$-sense. More generally, for $\sigma \geq 1$ and $\theta \in [0, \sigma]$, the problem (\ref{Eq.2}) has been widely concerned in a lot of papers \cite{DaoReissig2019_1, DaoReissig2019_2, DAbbiccoEbert2017, Ikehata2021, DAbbiccoEbert2022, DuongKainaneReissig2015, DAbbiccoEbert2021}. 
%Another important model of (\ref{Eq.2}) when $\sigma = 2$ is well-known as the beam equation or the plate equation in the case of space dimension $n= 1$ or $n=2$, respectively.
Among other things, Dao-Reissig in \cite{DaoReissig2019_1, DaoReissig2019_2} obtained $L^1$ estimates for solutions to (\ref{Eq.2}) by using the theory of modified Bessel functions combined with Fa\`a di Bruno's formula introduced in \cite{NarazakiReissig2013}. One recognizes that this approach is just very effective for ``parabolic like model'' corresponding to $\theta \in (0, \sigma/2)$ as well as ``$\sigma$-evolution-like model'' corresponding to $\theta \in (\sigma/2, \sigma)$. Unfortunately, it is not applicable to take into consideration the case of strongly damped $\sigma$-evolution equations, i.e. $\theta = \sigma$. Quite recently, D'Abbicco-Ebert in \cite{DAbbiccoEbert2021} obtained optimal $L^p-L^q$ estimates for solutions by treating separately the two different components of solutions including the oscillatory one and the diffusive one in a particular $t$-dependent zone of the frequency space. Ikehata in \cite{Ikehata2021} derived sharp decay/non-decay estimates of solutions to (\ref{Eq.2}) with $\theta = \sigma$ in the $L^2$-sense for any spatial dimension. \medskip

From these above-mentioned observations, we can say that the investigation of (\ref{Eq.2}) devoting only one damping term seems to be complete. To address the mix of two different damping terms, now we will discuss several recent results related to the problem (\ref{Main.Eq.1}). The authors in \cite{IkehataSawada2016, IkehataMichihisa2019} obtained the higher-order asymptotic behavior for solutions to (\ref{Main.Eq.1}) with $\sigma_1 = 0$ and $\sigma_2=\sigma=1$ under more heavy moment conditions on the initial data for any space dimension by applying Taylor expansion theorem effectively. More generally, in the case where $\sigma_1 = 0$ and $\sigma_2 = \sigma \geq 1$ are arbitrary, (\ref{Main.Eq.1}) is the so-called $\sigma$-evolution equations with frictional and visco-elastic (or strong) damping. Dao-Michihisa in \cite{DaoMichihisa2020} successfully found not only  higher order asymptotic expansions of solutions but also diffusion phenomenon in the $L^p-L^q$ framework, with $1 \leq p \leq q \leq \infty$. Among other things, they underlined that the presence of frictional damping affected the profiles of the solution at low frequencies, whereas their large frequency profile is modified by the presence of the visco-elastic damping. This present work is a continuation of the very recent paper of Dao-Duong-Nguyen \cite{DaoDuongNguyen2024} in which the authors have found the asymptotic behavior for the solution to problem (\ref{Main.Eq.1}) for any $\sigma \geq 1$ and $\sigma_1 \in [0, \sigma/2)$, $\sigma_2 \in (\sigma/2, \sigma]$, namely
\begin{align}
    u(t, x) \sim \left(\int_{\mathbb{R}^n} u_0(x) dx \right)\mathfrak{F}_{\xi \to x}^{-1}\left(e^{-|\xi|^{2(\sigma-\sigma_1)}t}\right) + \left(\int_{\mathbb{R}^n} u_1(x) dx \right) \mathfrak{F}_{\xi \to x}^{-1}\left(\frac{e^{-|\xi|^{2(\sigma-\sigma_1)}t}}{|\xi|^{2\sigma_1}}\right). \label{1_order_Asym}
\end{align}
Furthermore, they concluded about the essential interaction between ``parabolic-like model'' and ``$\sigma$-evolution-like models'' when they appear together in an equation. One realizes that the damping term representing ``parabolic like model'' has a greater influence in a comparison with the one standing for  
 ``$\sigma$ evolution like models'' in determining the first-order asymptotic behavior of solutions to (\ref{Main.Eq.1}). More precisely, we can see in \eqref{1_order_Asym} that this profile is described by the two Fourier multipliers only depending on the parameters $\sigma$ and $\sigma_1$. Meanwhile, the damping term corresponding to ``$\sigma$ evolution like models'' completely decides the necessary regularity for both the initial data and the solution to obtain these asymptotic behaviors. To the best of the authors' knowledge, it seems that so far there is no any paper dedicated to the study of the higher-order asymptotic profile of solutions to (\ref{Main.Eq.1}) for any $\sigma \geq 1$ and $\sigma_1 \in [0, \sigma/2)$, $\sigma_2 \in (\sigma/2, \sigma]$. For this reason, the main contribution of this paper is not only to explore such results in the $L^2$ framework but also to discover the role of the damping term representing ``$\sigma$-evolution like models'', i.e the parameter $\sigma_2$, on describing the asymptotic profile of solutions, which never appears in \cite{DaoDuongNguyen2024}. To concretize this point, we aim to establish the higher-order asymptotic behavior for solutions to (\ref{Main.Eq.1}) as follows:
\begin{align}
    u(t,x) \sim \mathfrak{F}^{-1}_{\xi \to x}\left(\mathcal{A}_0^k(t, \xi)\right) \ast_x u_0(x) + \mathfrak{F}^{-1}_{\xi \to x}\left(\mathcal{A}_1^k(t, \xi)\right) \ast_x u_1(x), \label{2_order_Asym}
\end{align}
where the quantities $\mathcal{A}_0^k (t,\xi)$ and $\mathcal{A}_1^k (t,\xi)$ stand for the $k$-th order asymptotic behavior of the solution, which will be introduced in Notation section. In addition, another novelty of this paper is that we have provided the sharp estimate for the error term in the higher-order asymptotic expansion of solutions for large time, that is,
\begin{align*}
    &\left\| u(t,x) - \mathfrak{F}^{-1}_{\xi \to x}\left(\mathcal{A}_0^k(t, \xi)\right) \ast_x u_0(x) - \mathfrak{F}^{-1}_{\xi \to x}\left(\mathcal{A}_1^k(t, \xi)\right) \ast_x u_1(x) \right\|_{L^2} \\
    &\hspace{3cm} \sim (1+t)^{-\frac{n}{4(\sigma-\sigma_1)}+\frac{\sigma_1}{\sigma-\sigma_1}-k\frac{\delta}{\sigma-\sigma_1}}.
\end{align*}

\textbf{The remaining part of this paper is organized as follows:} In Section \ref{Main_results.Sec}, we are going to state notations which will be used throughout this paper and main results indicating the higher-order asymptotic profile of solutions to (\ref{Main.Eq.1}) in the $L^2$ framework. Then, the proofs of main results will be presented more detail in Section \ref{section2}. %Eventually, we compare the results obtained in Section \ref{section2} with previous ones in some spatial dimensions in Section \ref{section3}.

\section{Notations and Main results} \label{Main_results.Sec}
\subsection{Notations}
To give some notations which will be used in the next sections, let us consider the following referenced Cauchy problem for (\ref{Main.Eq.1}):
\begin{align}
    \begin{cases}
        b u_{tt} +  (-\Delta)^{\sigma} u+ a_1(-\Delta)^{\sigma_1} u_t + a_2 (-\Delta)^{\sigma_2} u_t = 0, &x \in \mathbb{R}^n, \, t > 0, \\
        u(0,x) = u_0(x), \quad u_t(0,x) = u_1(x), &x \in \mathbb{R}^n,
    \end{cases} \label{Main.Eq.2}
\end{align}
 with two positive constants $a_1, a_2$ and a sufficiently small number $b>0$. At first glance, the problem (\ref{Main.Eq.2}) can be understood as a generalization in a comparison with (\ref{Main.Eq.1}). The main reason to concern the problem \eqref{Main.Eq.2} comes from the fact that the first-order asymptotic behavior for solutions to (\ref{Main.Eq.1}) is like parabolic models (when $a_2=0$) and has the property of $\sigma$-evolution like models (when $a_1=0$), which was sharply presented in \cite{DaoDuongNguyen2024}. More precisely, the solutions to (\ref{Main.Eq.1}) behave the same as those to the following anomalous diffusion problem:
\begin{align*}
\begin{cases}
    v_t + \nu (-\Delta)^{\sigma-\sigma_1} v = 0, & x \in \mathbb{R}^n, t > 0,\\
    v(0,x) = v_0(x), &x \in \mathbb{R}^n,
    \end{cases}
\end{align*}
where both a constant $\nu$ and the initial data $v_0$ are chosen suitably. Rough speaking, this problem is also as a consequence of \eqref{Main.Eq.2} by taking $b=0$ and $a_2=0$. For the purpose to describe the higher-order asymptotic behavior for solutions to (\ref{Main.Eq.1}), i.e \eqref{Main.Eq.2} when $(a_1,a_2,b)=(1,1,1)$, we can say that taking account of (\ref{Main.Eq.2}) plays an important role to show the interplay of all parameters $\sigma$, $\sigma_1$ and $\sigma_2$. To get started, we apply the Fourier transform to \eqref{Main.Eq.2} to have
\begin{align}
    \begin{cases}
        b \widehat{u}_{tt} + |\xi|^{2\sigma} \widehat{u}+ \big(a_1|\xi|^{2\sigma_1} + a_2 |\xi|^{2\sigma_2}\big) \widehat{u}_t = 0 &\xi \in \mathbb{R}^n, \, t > 0, \\
        \widehat{u}(0,\xi) = \widehat{u}_0(\xi), \quad \widehat{u}_t(0,\xi) = \widehat{u}_1(\xi) &\xi \in \mathbb{R}^n.
    \end{cases} \label{Main.Eq.2_Fourier}
\end{align} 
The solution to (\ref{Main.Eq.2_Fourier}) can be expressed in the form
\begin{align*}
    \widehat{u}(t,\xi, a_1, a_2, b) &= \frac{\lambda_1^0(|\xi|, a_1, a_2,b) e^{\lambda_2^0(|\xi|,a_1, a_2,b) t}-\lambda_2^0(|\xi|,a_1, a_2,b) e^{\lambda_1^0(|\xi|,a_1, a_2,b) t}}{\lambda_1^0(|\xi|, a_1, a_2,b) -\lambda_2^0(|\xi|, a_1, a_2,b)} \widehat{u_0}(\xi) \\
    &\qquad + \frac{e^{\lambda_1^0(|\xi|, a_1, a_2,b) t}- e^{\lambda_2^0(|\xi|,a_1, a_2,b) t}}{\lambda_1^0(|\xi|,a_1, a_2,b) -\lambda_2^0(|\xi|,a_1, a_2
    ,b)} \widehat{u_1}(\xi).
\end{align*}
The characteristic roots are given by
 \begin{align*}
   \lambda_1^0(|\xi|, a_1, a_2,b) &:= \frac{1}{2b} \left(-a_1|\xi|^{2\sigma_1} - a_2|\xi|^{2\sigma_2} + \sqrt{\left(a_1|\xi|^{2\sigma_1} + a_2|\xi|^{2\sigma_2}\right)^2-4b|\xi|^{2\sigma}}\right),\\
     \lambda_2^0(|\xi|, a_1, a_2, b) &:= -\frac{1}{2b} \left(a_1|\xi|^{2\sigma_1} + a_2|\xi|^{2\sigma_2} + \sqrt{\left(a_1|\xi|^{2\sigma_1} + a_2|\xi|^{2\sigma_2}\right)^2-4b|\xi|^{2\sigma}}\right).
 \end{align*}
 For simplicity, with loss of generality we can assume that $a_1=1$. Then, we denote $a_2=a$ again. We can rewrite $\lambda_1^0(|\xi|, 1, a, b) := \lambda_1^0(|\xi|,a,b)$ as
 \begin{align*}
    \lambda_1^0(|\xi|,a,b) &= \frac{-2|\xi|^{2(\sigma-\sigma_1)}}{\left(1+a|\xi|^{2(\sigma_2-\sigma_1)}\right) \left(1 + \displaystyle\sqrt{1-\frac{4b|\xi|^{2(\sigma-2\sigma_1)}}{\left(1+a|\xi|^{2(\sigma_2-\sigma_1)}\right)^2}}\right)}\\
     &=: -2|\xi|^{2(\sigma-\sigma_1)} \Gamma_1(|\xi|, a) (1+\Gamma_2(|\xi|,a, b))^{-1},
 \end{align*}
where we define the functions
\begin{align*}
    \Gamma_{1}(|\xi|, a) := \left(1+ a|\xi|^{2(\sigma_2-\sigma_1)}\right)^{-1} \quad \text{ and }\quad \Gamma_2(|\xi|, a,b) := \left(1-4b |\xi|^{2(\sigma-2\sigma_1)} (\Gamma_1(|\xi|,a))^2\right)^{1/2}.
\end{align*}
Due to the singularity of $\lambda_2^0(|\xi|,1, a,b) := \lambda_2^0(|\xi|,a,b)$ at $b=0$, to achieve the objectives of the paper we will consider
\begin{align*}
    \lambda_2^0(|\xi|,a,b) :=& -\frac{1}{2} \left(|\xi|^{2\sigma_1} + a|\xi|^{2\sigma_2} + \sqrt{\left(|\xi|^{2\sigma_1} + a|\xi|^{2\sigma_2}\right)^2-4b|\xi|^{2\sigma}}\right)\\
    =& -\frac{|\xi|^{2\sigma_1}}{2} (1+ a|\xi|^{2(\sigma_2-\sigma_1)}) (1+\Gamma_2(|\xi|,a,b)).
\end{align*}
Additionally, we also denote the following quantities:
\begin{itemize}
    \item $G(|\xi|, a, b) := \sqrt{\left(|\xi|^{2\sigma_1}+a|\xi|^{2\sigma_2}\right)^2-4b|\xi|^{2\sigma}} = |\xi|^{2\sigma_1} \left(1+a|\xi|^{2(\sigma_2-\sigma_1)}\right) \Gamma_2(|\xi|,a,b). $
    \vspace{0.5cm}
    \item $\widehat{K_0^1}(t, |\xi|, a, b) := (G(|\xi|, a,b))^{-1} \lambda_1^0(|\xi|,a,b) e^{\lambda_2^0(|\xi|,a,b)t} .$
    \vspace{0.5cm}
    \item  $\widehat{K_0^2}(t,|\xi|,a,b) := (G(|\xi|,a,b))^{-1} \lambda_2^0(|\xi|, a,b) e^{\lambda_1^0(|\xi|,a,b)t}.$
    \vspace{0.5cm}
    \item $\widehat{K_1^1}(t, |\xi|, a, b) := (G(|\xi|, a,b))^{-1} e^{\lambda_1^0(|\xi|,a,b)t}. $
    \vspace{0.5cm}
    \item $\widehat{K_1^2}(t,|\xi|,a,b) = (G(|\xi|,a,b))^{-1} e^{\lambda_2^0(|\xi|,a,b)t}.$ 
    \vspace{0.5cm}
    \item $\mathcal{A}_0^k(t,\xi) := 
    \begin{cases}
       \displaystyle \sum_{0 \leq j+m \leq k-1} \frac{1}{j!m!} \left(\frac{\partial^{j+m} \widehat{K_0^1}}{\partial a^j \partial b^m} (t,|\xi|,0,0) - \frac{\partial^{j+m}\widehat{K_0^2}}{\partial a^j \partial b^m} (t,|\xi|,0,0) \right)&\text{ if } k \geq 1,\\
       0 &\text{ if } k =0.
    \end{cases}$
    \vspace{0.5cm}
    \item
    $\mathcal{B}_0^k(t,\xi) = 
    \begin{cases}
    \displaystyle\sum_{0 \leq j+m \leq k-1} -\frac{1}{j!m!}\frac{\partial^{j+m}\widehat{K_0^2}}{\partial a^j \partial b^m}(t,|\xi|,0,0) &\text{ if } k \geq 1,\\
    0 &\text{ if } k = 0 .
    \end{cases}$ 
    \vspace{0.5cm}
    
    \item $\mathcal{A}_1^k(t,\xi) := 
    \begin{cases}
       \displaystyle \sum_{0 \leq j+m \leq k-1} \frac{1}{j!m!}\left(\frac{\partial^{j+m} \widehat{K_1^1}}{\partial a^j \partial b^m} (t,|\xi|,0,0) -   \frac{\partial^{j+m}\widehat{K_1^2}}{\partial a^j \partial b^m} (t,|\xi|,0,0) \right)&\text{ if } k \geq 1,\\
      0 &\text{ if } k =0.
    \end{cases}$
    \item 
$\mathcal{B}_1^k(t,\xi) =
\begin{cases}
    \displaystyle\sum_{0 \leq j+m \leq k-1} \frac{1}{j! m!} \frac{\partial^{j+m}\widehat{K_1^1}}{\partial a^j \partial b^m}(t,|\xi|,0,0) &\text{ if } k \geq 1,\\
    0 &\text{ if } k = 0.
    \end{cases}$
\end{itemize}

\vspace{0.5cm}
  As usual, $H^{s}$ and $\dot{H}^{s}$, with $s \ge 0$, denote Bessel and Riesz potential spaces based on $L^2$ spaces. Here $\big<D\big>^{s}$ and $|D|^{s}$ stand for the differential operators with symbols $\big<\xi\big>^{s}$ and $|\xi|^{s}$, respectively. Moreover, for any $\nu \in \R$, we also denote by $[\nu]^+:= \max\{\nu,0\}$, its positive part.

Finally, we fix the constant
$$ \delta := \min\{\sigma_2-\sigma_1, \sigma-2\sigma_1 \} =
\begin{cases}
    \sigma_2-\sigma_1
 &\text{ if } \sigma_2 +\sigma_1 < \sigma \\
 \sigma-2\sigma_1 &\text{ if } \sigma_2 +\sigma_1 \geq \sigma
 \end{cases} $$
and denote the quantity
$$ P_1 := \displaystyle\int_{\mathbb{R}^n} u_1(x) dx. $$

\subsection{Main results}
Let us now state the main results which will be proved in this paper.
\begin{theorem}[\textbf{Asymptotic profile with $\sigma_1 > 0$}]\label{Linear_Asym}
    Let $n > 4\sigma_1, \,s \geq 0$ and $k \in \mathbb{N}$. Assuming that the initial data $(u_0, u_1)$ satisfy
    \begin{align*}
        (u_0, u_1) \in \mathcal{D}_s := (H^{s} \cap L^1) \times (H^{[s-2\sigma_2]^+} \cap L^1). 
    \end{align*}
    Then, the solution to \eqref{Main.Eq.1} satisfy 
    \begin{align}
        &\bigg\| |\xi|^s\widehat{u}(t,\xi) - |\xi|^s \mathcal{A}_0^k(t,\xi) \widehat{u_0}(\xi) - |\xi|^s\mathcal{A}_1^k(t,\xi) \widehat{u_1}(\xi)\bigg\|_{L^2}\notag\\
        &\hspace{5cm}\lesssim (1+t)^{-\frac{n}{4(\sigma-\sigma_1)}-\frac{s}{2(\sigma-\sigma_1)}+\frac{\sigma_1}{\sigma-\sigma_1}-k\frac{\delta}{\sigma-\sigma_1}} \|(u_0, u_1)\|_{\mathcal{D}_s}, \label{UpperEs2.1}
    \end{align}
    for all $t \geq 1$. Moreover,  if we assume an additional condition $P_1 \neq 0$, then there exists a positive constant 
$C_1:= C_1 \left( P_1, \|(u_0, u_1)\|_{\mathcal{D}_s}\right) $ such that the following estimate hold:
        \begin{align}
        &\bigg\|  |\xi|^s\widehat{u}(t,\xi) - |\xi|^s\mathcal{A}_0^k(t,\xi)\widehat{u_0}(\xi) - |\xi|^s \mathcal{A}_1^k(t,\xi) \widehat{u_1}(\xi)\bigg\|_{L^2} \nonumber \\
        &\hspace{5cm} \geq C_1 (1+t)^{-\frac{n}{4(\sigma-\sigma_1)}-\frac{s}{2(\sigma-\sigma_1)}+\frac{\sigma_1}{\sigma-\sigma_1}-k\frac{\delta}{\sigma-\sigma_1}}, \label{LowerEs2.1}
    \end{align}
    for all $t \geq 1$.
\end{theorem}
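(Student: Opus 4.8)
The plan is to work entirely on the Fourier side: by Plancherel's theorem the quantities in \eqref{UpperEs2.1} and \eqref{LowerEs2.1} are $L^2_\xi$-norms, which I split into a low-frequency region $\{|\xi|\le\e\}$ and a high-frequency region $\{|\xi|\ge\e\}$ for a small fixed $\e>0$. The starting point is the observation that at the physical parameters $(a,b)=(1,1)$ the exact Fourier solution coincides with the kernel combination $\widehat u=(\widehat{K_0^1}-\widehat{K_0^2})\widehat{u_0}+(\widehat{K_1^1}-\widehat{K_1^2})\widehat{u_1}$, while $\mathcal A_0^k$ and $\mathcal A_1^k$ are precisely the Taylor polynomials of these coefficients in $(a,b)$ about $(0,0)$ up to total degree $k-1$ (evaluated at $a=b=1$, where every monomial $a^jb^m$ equals $1$). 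Hence the error $\widehat u-\mathcal A_0^k\widehat{u_0}-\mathcal A_1^k\widehat{u_1}$ is exactly the degree-$k$ Taylor remainder, which by the integral form of Taylor's theorem is a sum over $j+m=k$ of terms $\tfrac{1}{(k-1)!}\binom{k}{j}\int_0^1(1-\theta)^{k-1}\,\partial_a^j\partial_b^m[\,\cdot\,](t,|\xi|,\theta,\theta)\,d\theta$. The structural key is that each $\partial_a$ differentiates $\Gamma_1$ and produces a factor $|\xi|^{2(\sigma_2-\sigma_1)}$, while each $\partial_b$ differentiates $\Gamma_2$ and produces a factor $|\xi|^{2(\sigma-2\sigma_1)}$, so every remainder term carries at least the gain $|\xi|^{2k\delta}$ with $\delta=\min\{\sigma_2-\sigma_1,\sigma-2\sigma_1\}$.

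For the low-frequency upper bound I would estimate the order-$k$ mixed derivatives of the four kernels uniformly for $\theta\in[0,1]$. Since $e^{\lambda_i^0 t}$ and $G^{-1}$ are compositions built from the elementary functions $\Gamma_1,\Gamma_2$, I would invoke Fa\`a di Bruno's formula to control $\partial_a^j\partial_b^m$ acting on them; the combinatorics yield, besides the sharp gain $|\xi|^{2k\delta}$, at worst a polynomial-in-$t$ weight absorbed by the exponential, together with the singular factor $G^{-1}\sim|\xi|^{-2\sigma_1}$. Using $\lambda_1^0(|\xi|,\theta,\theta)\le-c|\xi|^{2(\sigma-\sigma_1)}$ uniformly in $\theta$, the dominant remainder terms behave like $|\xi|^{s-2\sigma_1+2k\delta}e^{-c|\xi|^{2(\sigma-\sigma_1)}t}$ for the $\widehat{K_1^1}$ piece and $|\xi|^{s+2k\delta}e^{-c|\xi|^{2(\sigma-\sigma_1)}t}$ for the $\widehat{K_0^2}$ piece; bounding $|\widehat{u_j}|$ by $\|u_j\|_{L^1}$ and using the scaling $\int_0^\e r^{2\alpha+n-1}e^{-2cr^{2(\sigma-\sigma_1)}t}\,dr\sim t^{-(2\alpha+n)/(2(\sigma-\sigma_1))}$ produces exactly the rate in \eqref{UpperEs2.1}. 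The $e^{\lambda_2^0 t}$-kernels $\widehat{K_0^1},\widehat{K_1^2}$ satisfy $e^{\lambda_2^0 t}\sim e^{-c|\xi|^{2\sigma_1}t}$, and since $\sigma_1<\sigma-\sigma_1$ the same computation with $2\sigma_1$ in place of $2(\sigma-\sigma_1)$ gives a strictly faster rate; likewise the $\widehat{u_0}$-contribution is faster by the factor $t^{-\sigma_1/(\sigma-\sigma_1)}$, so both are subdominant. The condition $n>4\sigma_1$ is what keeps the relevant exponent $2s-4\sigma_1+4k\delta+n$ positive.

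For the high-frequency part I would show that both the exact solution and the asymptotic profiles are negligible for $t\ge1$. On $\{|\xi|\ge\e\}$ one has $\mathrm{Re}\,\lambda_1^0\le-c$ and $\mathrm{Re}\,\lambda_2^0\le-c|\xi|^{2\sigma_2}$ (or the analogous bounds governing the two branches, including the oscillatory zone where the roots are complex conjugate), so the solution and its regularity-weighted versions decay exponentially in $t$ once the data are measured in $\|u_0\|_{H^s}$ and $\|u_1\|_{H^{[s-2\sigma_2]^+}}$; here the shift by $2\sigma_2$ is exactly what $\widehat{K_1^i}=G^{-1}e^{\lambda_i^0 t}$ requires at large frequencies, since $G^{-1}\sim|\xi|^{-2\sigma_2}$ there. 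The Taylor-polynomial profiles $\mathcal A_j^k$ are finite combinations of the same exponentials and decay at least as fast. Combining the two regions yields \eqref{UpperEs2.1}.

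For the lower bound \eqref{LowerEs2.1} I would peel off the leading remainder, i.e.\ the degree-$k$ Taylor coefficient, which equals $\mathcal A_j^{k+1}-\mathcal A_j^k$, and write $\widehat u-\mathcal A_0^k\widehat{u_0}-\mathcal A_1^k\widehat{u_1}=(\mathcal A_1^{k+1}-\mathcal A_1^k)\widehat{u_1}+(\mathcal A_0^{k+1}-\mathcal A_0^k)\widehat{u_0}+\big(\widehat u-\mathcal A_0^{k+1}\widehat{u_0}-\mathcal A_1^{k+1}\widehat{u_1}\big)$. The last bracket is the $(k{+}1)$-th order error, already controlled above by the rate of \eqref{UpperEs2.1} with $k+1$, hence smaller by one factor $t^{-\delta/(\sigma-\sigma_1)}$; the $u_0$-term is smaller by $t^{-\sigma_1/(\sigma-\sigma_1)}$. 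It then suffices to bound $(\mathcal A_1^{k+1}-\mathcal A_1^k)\widehat{u_1}$ from below through its dominant $\widehat{K_1^1}$ contribution: near $\xi=0$ one has $\widehat{u_1}(\xi)\to P_1\ne0$ by continuity (as $u_1\in L^1$), and the leading coefficient behaves like $c_k\,|\xi|^{2k\delta-2\sigma_1}e^{-|\xi|^{2(\sigma-\sigma_1)}t}$ with $c_k\ne0$, whose $L^2$-norm against $P_1$ realizes precisely the asserted rate with a positive constant $C_1$; a reverse triangle inequality then finishes the proof. \textbf{The main obstacle} I anticipate is twofold: first, carrying out the Fa\`a di Bruno bookkeeping so as to obtain the uniform-in-$\theta$ bound on the order-$k$ mixed derivatives \emph{together with} the sharp power $|\xi|^{2k\delta}$ rather than a worse one; and second, verifying that the leading coefficient $c_k$ does not vanish, which is delicate in the borderline case $\sigma_1+\sigma_2=\sigma$, where $\partial_a$- and $\partial_b$-derivatives both feed the same order $|\xi|^{2k\delta}$ and one must rule out cancellation between them.
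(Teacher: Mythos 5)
Your proposal follows essentially the same route as the paper: Plancherel plus a low/high-frequency cut-off, identification of $\mathcal{A}_0^k,\mathcal{A}_1^k$ as the Taylor polynomials in $(a,b)$ about $(0,0)$ with the error equal to the degree-$k$ remainder, Fa\`a di Bruno control of the mixed $\partial_a^j\partial_b^m$-derivatives of the four kernels yielding the gain $|\xi|^{2k\delta}$ together with the factors $|\xi|^{-2\sigma_1}$ and $e^{-c|\xi|^{2(\sigma-\sigma_1)}t}$ or $e^{-c|\xi|^{2\sigma_1}t}$, the scaling estimate for $\||\xi|^{\alpha}e^{-c|\xi|^{\beta}t}\chi_{\rm L}\|_{L^2}$, and for the lower bound the peeling-off of the degree-$k$ coefficient of $\widehat{K_1^1}$ tested against $P_1\neq 0$. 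The only notable differences are cosmetic (integral versus Lagrange form of the remainder; the paper replaces your ``continuity of $\widehat{u_1}$ at $0$'' step by the quoted Ikehata--Takeda lemma giving $o(t^{-\alpha})$ for $\phi\ast_x u_1-P_1\phi$), and the borderline non-cancellation issue at $\sigma_1+\sigma_2=\sigma$ that you rightly flag is not actually resolved in the paper either, which disposes of it with a ``without loss of generality''.
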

\begin{remark}
\fontshape{n}
\selectfont
    By choosing $s=0$, from \eqref{UpperEs2.1} let us describe the asymptotic profile of solutions to (\ref{Main.Eq.1}) in the case $\sigma_1 > 0$ as follows:
\begin{equation} \label{Optimal_Rep}
    \widehat{u}(t,\xi) \sim  \mathcal{A}_0^k(t,\xi) u_0(\xi) +  \mathcal{A}_1^k(t,\xi)u_1(\xi). 
\end{equation}
To be specific, for $k=1$ we calculate
\begin{align*}
    \mathcal{A}_0^1(t,\xi) = -|\xi|^{2(\sigma-2\sigma_1)} e^{-|\xi|^{2\sigma_1}t} + e^{-|\xi|^{2(\sigma-\sigma_1)}} \text{ and }\,\, \mathcal{A}_1^1(t,\xi) = \frac{e^{-|\xi|^{2(\sigma-\sigma_1)}t}}{|\xi|^{2\sigma_1}} - \frac{e^{-|\xi|^{2\sigma_1}t}}{|\xi|^{2\sigma_1}}.
\end{align*}
For $k=2$, we obtain
\begin{align*}
    \mathcal{A}_0^2(t,\xi) &= -|\xi|^{2(\sigma-2\sigma_1)} e^{-|\xi|^{2\sigma_1}t} \left(1-2|\xi|^{2(\sigma_2-\sigma_1)} +3 |\xi|^{2(\sigma-2\sigma_1)}\right) \\
    &\hspace{2cm}+  t|\xi|^{2(\sigma-\sigma_1)} e^{-|\xi|^{2\sigma_1} t} \left(|\xi|^{2(\sigma_2-\sigma_1)}-|\xi|^{2(\sigma-2\sigma_1)}\right) \\
    &\quad+ e^{-|\xi|^{2(\sigma-\sigma_1)}t} \left(1+|\xi|^{\sigma-2\sigma_1}\right)\\
    &\hspace{2cm} + t|\xi|^{2(\sigma-\sigma_1)} e^{-|\xi|^{2(\sigma-\sigma_1)}t}\left(|\xi|^{2(\sigma_2-\sigma_1)}-|\xi|^{2(\sigma-2\sigma_1)}\right)
    \end{align*}
    and 
    \begin{align*}
    \mathcal{A}_1^2(t,\xi) &= \frac{e^{-|\xi|^{2(\sigma-\sigma_1)}t}}{|\xi|^{2\sigma_1}} \left(1-|\xi|^{2(\sigma_2-\sigma_1)}+2|\xi|^{2(\sigma-2\sigma_1)} \right) \\
    &\hspace{2cm}+ t|\xi|^{2(\sigma-\sigma_1)} \frac{e^{-|\xi|^{2(\sigma-\sigma_1)}t}}{|\xi|^{2\sigma_1}} \left(|\xi|^{2(\sigma_2-\sigma_1)}-|\xi|^{2(\sigma-2\sigma_1)}\right)\\
    &\qquad- \frac{e^{-|\xi|^{2\sigma_1}t}}{|\xi|^{2\sigma_1}} \left(1-|\xi|^{2(\sigma_2-\sigma_1)}+2|\xi|^{2(\sigma-2\sigma_1)}\right)\\
    &\hspace{2cm}\quad -te^{-|\xi|^{2\sigma_1}t} \left(-|\xi|^{2(\sigma_2-\sigma_1)}+|\xi|^{2(\sigma-2\sigma_1)}\right). 
\end{align*}
From this observation, we can see the influence of the parameter $\sigma_2$ representing ``$\sigma$-evolution like models'' on the asymptotic behavior of solutions with respect to the second-order. When comparing the above results with (\ref{1_order_Asym}), we may observe that some redundant quantities appear. The advantage of the solution representation \eqref{Optimal_Rep} is that it allows us to conclude the precise decay time of the error term for large time, namely,
\begin{align*}
    \left\| \widehat{u}(t,\xi) - \mathcal{A}_0^k(t,\xi) \widehat{u_0}(\xi) - \mathcal{A}_1^k(t,\xi) \widehat{u_1}(\xi)\right\|_{L^2} \sim (1+t)^{-\frac{n}{4(\sigma-\sigma_1)}+\frac{\sigma_1}{\sigma-\sigma_1}-k\frac{\delta}{\sigma-\sigma_1}},
\end{align*}
which is really new in a comparison with all previous studies. 
\end{remark}

\begin{theorem}[\textbf{Asymptotic profile with $\sigma_1 = 0$}]\label{Linear_Asym_1}
    Let $n \geq 1, s \geq 0$ and $k \in \mathbb{N}$. Assuming that the initial data $(u_0, u_1)$ satisfy
    \begin{align*}
        (u_0, u_1) \in \mathcal{D}_s := (H^{s} \cap L^1) \times (H^{[s-2\sigma_2]^+} \cap L^1). 
    \end{align*}
    Then, the solution to the problem (\ref{Main.Eq.1}) with $\sigma_1 = 0$ satisfy 
    \begin{align}
        &\bigg\||\xi|^s  \widehat{u}(t,\xi) - |\xi|^s \mathcal{B}_0^k(t,\xi) \widehat{u_0}(\xi) - |\xi|^s\mathcal{B}_1^k(t,\xi) \widehat{u_1}(\xi)\bigg\|_{L^2} \lesssim (1+t)^{-\frac{n}{4\sigma}-\frac{s}{2\sigma}-k\frac{\sigma_2}{\sigma}} \|(u_0, u_1)\|_{\mathcal{D}_s}, \label{UpperEs2.2}
    \end{align}
    for all $t \geq 1$. Moreover, if we assume an additional condition $P_1 \neq 0$, there exists a positive constant 
$C_2 := C_2\left(P_1, \|(u_0, u_1)\|_{\mathcal{D}_s}\right) $ such that the following estimate hold:
        \begin{align}
        &\bigg\|  |\xi|^s\widehat{u}(t,\xi) - |\xi|^s\mathcal{B}_0^k(t,\xi)\widehat{u_0}(\xi) - |\xi|^s\mathcal{B}_1^k(t,\xi) \widehat{u_1}(\xi)\bigg\|_{L^2} \geq C_2 (1+t)^{-\frac{n}{4\sigma}-\frac{s}{2\sigma}-k\frac{\sigma_2}{\sigma}}, \label{LowerEs2.2}
    \end{align}
    for all $t \geq 1$.
\end{theorem}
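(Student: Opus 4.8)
The plan is to argue entirely on the Fourier side, so that by Plancherel the quantities in \eqref{UpperEs2.2}--\eqref{LowerEs2.2} are ordinary weighted $L^2$ integrals. Setting $(a,b)=(1,1)$ in \eqref{Main.Eq.2_Fourier}, the solution is represented through the kernels introduced in the Notations as
$$\widehat{u}(t,\xi)=\bigl(\widehat{K_0^1}-\widehat{K_0^2}\bigr)(t,|\xi|,1,1)\,\widehat{u_0}(\xi)+\bigl(\widehat{K_1^1}-\widehat{K_1^2}\bigr)(t,|\xi|,1,1)\,\widehat{u_1}(\xi),$$
which is legitimate because at $b=1$ the renormalized $\lambda_2^0$ coincides with the true characteristic root and $\lambda_1^0-\lambda_2^0=G$. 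The decisive structural fact for $\sigma_1=0$ is the separation of the roots at low frequency: as $|\xi|\to0$ one has $\lambda_1^0(|\xi|,1,1)=-|\xi|^{2\sigma}(1+o(1))$ while $\lambda_2^0(|\xi|,1,1)\to-1$. Hence the kernels carrying $e^{\lambda_2^0 t}$, namely $\widehat{K_0^1}$ and $\widehat{K_1^2}$, are exponentially damped near $\xi=0$ and only $\widehat{K_0^2}$ and $\widehat{K_1^1}$ (carrying $e^{\lambda_1^0 t}\approx e^{-|\xi|^{2\sigma}t}$) survive to fix the polynomial profile --- which is exactly why $\mathcal{B}_0^k$ and $\mathcal{B}_1^k$ retain only these two kernels. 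I would then split $\R^n$ into the zones $\{|\xi|\le\e\}$, $\{\e\le|\xi|\le N\}$, $\{|\xi|\ge N\}$ and estimate each weighted $L^2$ norm separately.

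The heart of the matter is the low-frequency zone. There I freeze $a=b=1$ and regard $\widehat{K_0^2}$ and $\widehat{K_1^1}$ as smooth functions of $(a,b)$ near $(0,0)$; their order-$(k-1)$ Taylor polynomials are, by construction, precisely $-\mathcal{B}_0^k$ and $\mathcal{B}_1^k$. Differentiating the composites $G^{-1}\lambda_2^0 e^{\lambda_1^0 t}$ and $G^{-1}e^{\lambda_1^0 t}$ through $\Gamma_1,\Gamma_2$ is where Fa\`a di Bruno's formula is indispensable: each derivative of $e^{\lambda_1^0 t}$ spawns powers $t^p$ (up to $t^{j+m}$) times products of derivatives of $\lambda_1^0$. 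The crucial bookkeeping is that $\lambda_1^0=-|\xi|^{2\sigma}\bigl(1+O(a|\xi|^{2\sigma_2})+O(b|\xi|^{2\sigma})\bigr)$, so every $a$- or $b$-derivative comes paired with a harmless factor $t|\xi|^{2\sigma}$ (absorbed since $(t|\xi|^{2\sigma})^p e^{-ct|\xi|^{2\sigma}}$ is bounded) together with a genuine gain of $|\xi|^{2\sigma_2}$ per $a$-derivative and $|\xi|^{2\sigma}$ per $b$-derivative, while $G^{-1}$ and $\lambda_2^0$ and their derivatives stay bounded because $G\to1$ and $\lambda_2^0\to-1$ as $|\xi|\to0$ (the case $\sigma_1=0$ removing the $|\xi|^{-2\sigma_1}$ singularity of Theorem \ref{Linear_Asym}). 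Consequently the order-$k$ Taylor remainder is $\lesssim|\xi|^{2k\delta}e^{-c|\xi|^{2\sigma}t}$ with $\delta=\min\{\sigma_2,\sigma\}=\sigma_2$. Multiplying by $|\xi|^s$ and $|\widehat{u_j}(\xi)|\le\|u_j\|_{L^1}$ and integrating via the substitution $\eta=t^{1/(2\sigma)}\xi$ produces the rate $(1+t)^{-\frac{n}{4\sigma}-\frac{s}{2\sigma}-k\frac{\sigma_2}{\sigma}}$, and the two exponentially small kernels $\widehat{K_0^1}\widehat{u_0}$, $\widehat{K_1^2}\widehat{u_1}$ are bounded directly by $e^{-ct}$ and absorbed.

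On the intermediate zone both characteristic roots have real part bounded above by a negative constant, so $\widehat{u}$ and the profiles all decay like $e^{-ct}$ and contribute harmlessly. On the high-frequency zone I estimate $\widehat{u}$ and the profile separately, since they are no longer close: here $G\sim|\xi|^{2\sigma_2}$, the $u_1$-multiplier behaves like $|\xi|^{-2\sigma_2}e^{\lambda_1^0 t}$ and the $u_0$-multiplier stays bounded, while $\lambda_1^0\le-c\,N^{2(\sigma-\sigma_2)}$ for $|\xi|\ge N$ (with $\lambda_1^0\approx-1$ when $\sigma_2=\sigma$) gives exponential time decay. The gain of $|\xi|^{-2\sigma_2}$ in the $u_1$ part is exactly what makes $\|u_1\|_{H^{[s-2\sigma_2]^+}}$ the natural norm, whereas the $u_0$ part is controlled by $\|u_0\|_{H^s}$; both high-frequency contributions are $\lesssim e^{-ct}\|(u_0,u_1)\|_{\mathcal{D}_s}$, faster than the low-frequency rate. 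Summing the three zones yields \eqref{UpperEs2.2}.

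For the lower bound \eqref{LowerEs2.2} I extract the genuine leading part of the low-frequency error. After the rescaling $\eta=t^{1/(2\sigma)}\xi$, the order-$k$ remainder equals $(1+t)^{-\frac{n}{4\sigma}-\frac{s}{2\sigma}-k\frac{\sigma_2}{\sigma}}$ times a fixed linear combination of profile functions of the shape $|\eta|^{2\beta}e^{-|\eta|^{2\sigma}}$, weighted by $\widehat{u_0}(0)$ and $\widehat{u_1}(0)=P_1$, plus corrections that are either of strictly faster decay or controlled by the oscillation $\widehat{u_j}(\xi)-\widehat{u_j}(0)=o(1)$ (continuity of $\widehat{u_j}$ for $u_j\in L^1$). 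Since these profile functions are linearly independent in $L^2$ and the coefficient attached to $P_1$ is nonzero, the assumption $P_1\neq0$ forces the limiting profile to be nonzero, and a reverse-triangle-inequality argument for $\e$ small and $t$ large delivers the constant $C_2$. I expect the principal obstacle to be precisely the Fa\`a di Bruno bookkeeping in the low-frequency remainder: one must check both that the extracted Taylor polynomial coincides exactly with $\mathcal{B}_0^k,\mathcal{B}_1^k$ and that the remainder carries the \emph{sharp} power $|\xi|^{2k\delta}$ with no spurious negative power of $|\xi|$ and no residual uncompensated power of $t$, since any slippage there would corrupt both the exponent in \eqref{UpperEs2.2} and the nonvanishing needed for \eqref{LowerEs2.2}.
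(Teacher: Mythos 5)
Your upper-bound argument is, in all essentials, the paper's own proof: the same kernel decomposition $\widehat{u}=(\widehat{K_0^1}-\widehat{K_0^2})\widehat{u_0}+(\widehat{K_1^1}-\widehat{K_1^2})\widehat{u_1}$, the same key observation that for $\sigma_1=0$ the roots separate ($\lambda_2\to-1$, $G\to 1$) so that $\widehat{K_0^1}$ and $\widehat{K_1^2}$ are $O(e^{-ct})$ at low frequencies and only $\widehat{K_0^2},\widehat{K_1^1}$ enter the profiles, the same Taylor expansion in $(a,b)$ at $(0,0)$ evaluated at $(1,1)$ with Fa\`a di Bruno controlling the derivatives (each $\partial_a$ gaining $|\xi|^{2\sigma_2}$, each $\partial_b$ gaining $|\xi|^{2\sigma}$, powers of $t|\xi|^{2\sigma}$ absorbed into the exponential), the remainder $\lesssim|\xi|^{2k\sigma_2}e^{-c|\xi|^{2\sigma}t}$, the scaling substitution of Lemma \ref{lemma2.7}, and exponential decay at high frequencies via the $|\xi|^{-2\sigma_2}$ gain in the $u_1$-multiplier. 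Your three-zone splitting versus the paper's two cut-offs is cosmetic. This half of the proposal is correct.

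The lower bound is where there is a genuine gap, and it sits exactly at your appeal to ``linear independence'' of the rescaled profiles. When $\sigma_1=0$ the $u_0$- and $u_1$-contributions at order $k$ decay at the \emph{same} rate $t^{-\frac{n}{4\sigma}-\frac{s}{2\sigma}-k\frac{\sigma_2}{\sigma}}$ (indeed $-\widehat{K_0^2}(t,|\xi|,0,0)=\widehat{K_1^1}(t,|\xi|,0,0)=e^{-|\xi|^{2\sigma}t}$), so — unlike in Theorem \ref{Linear_Asym}, where the extra factor $|\xi|^{-2\sigma_1}$ makes the $u_1$-term strictly dominant and lets one discard the $u_0$-term as $o(\cdot)$ — you really must treat the two jointly, as you propose. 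But the independence you invoke fails at $k=0$: both rescaled profiles equal $e^{-|\eta|^{2\sigma}}$, so the leading part is $(P_0+P_1)e^{-|\eta|^{2\sigma}}$ and $P_1\neq 0$ alone does not preclude its vanishing when $P_0=-P_1$. For $k\geq 1$ the two polynomial-times-Gaussian profiles do appear to be independent (e.g.\ at $k=1$, $|\eta|^{2\sigma_2+2\sigma}e^{-|\eta|^{2\sigma}}$ versus $(-|\eta|^{2\sigma_2}+|\eta|^{2\sigma_2+2\sigma})e^{-|\eta|^{2\sigma}}$ when $\sigma_2<\sigma$), but this must actually be verified from the constants $C^*_{2,h,j,m},C^*_{3,h,j,m}$ rather than asserted. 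Note that the paper itself elides this point by saying the lower bound follows ``similarly to Theorem \ref{Linear_Asym}''; that reduction silently uses the domination $\mathcal{J}_3=o(\mathcal{J}_1)$, which is lost at $\sigma_1=0$. So your instinct to couple $P_0$ and $P_1$ is the right one, but as written the independence step is unproved and is false in the case $k=0$; you would need either to restrict to $k\geq 1$ and check independence explicitly, or to add a non-cancellation hypothesis on the pair $(P_0,P_1)$.
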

\begin{remark}
\fontshape{n}
\selectfont
    By choosing $s = 0$, we can describe the asymptotic profile of solutions to (\ref{Main.Eq.1}) in the case $\sigma_1 = 0$ as follows:
    \begin{align*}
     \widehat{u}(t,\xi) \sim  \mathcal{B}_0^k(t,\xi) u_0(\xi) + 
     \mathcal{B}_1^k(t,\xi) u_1(\xi).
\end{align*}
Moreover, we also obtain the sharp estimate for $k$-th order asymptotic expansion error of solutions for large time as follows:
\begin{align*}
    &\bigg\|  |\xi|^s\widehat{u}(t,\xi) - |\xi|^s\mathcal{B}_0^k(t,\xi)\widehat{u_0}(\xi) - |\xi|^s\mathcal{B}_1^k(t,\xi) \widehat{u_1}(\xi)\bigg\|_{L^2} \sim (1+t)^{-\frac{n}{4\sigma}-\frac{s}{2\sigma}-k\frac{\sigma_2}{\sigma}}.
\end{align*}
Specifically, for $k=1$, we calculate
\begin{align*}
    \mathcal{B}_0^1(t,\xi) = \mathcal{B}_1^1(t,\xi) = e^{-|\xi|^{2\sigma}t}
\end{align*}
and for $k=2$, we obtain
\begin{align*}
    \mathcal{B}_0^2(t,\xi) &= e^{-|\xi|^{2\sigma}t} \left(1 +|\xi|^{2\sigma}\right) + t|\xi|^{2\sigma} e^{-|\xi|^{2\sigma}t}\left(|\xi|^{2\sigma_2}-|\xi|^{2\sigma}\right),\\
    \mathcal{B}_1^2(t,\xi) &= e^{-|\xi|^{2\sigma}t} \left(1-|\xi|^{2\sigma_2}+2|\xi|^{2\sigma} \right)+ t|\xi|^{2\sigma} e^{-|\xi|^{2\sigma}t}\left(|\xi|^{2\sigma_2}-|\xi|^{2\sigma}\right).
\end{align*}
\end{remark}
\begin{remark}
\fontshape{n}
\selectfont
    The difference between  Theorem \ref{Linear_Asym} and Theorem \ref{Linear_Asym_1} is that the quantities 
    \begin{align*}
        \displaystyle\frac{\partial^{j+m}}{\partial a^j \partial b^m} \widehat{K_0^1}(t,|\xi|, 0,0) \text{ and } \frac{\partial^{j+m}}{\partial a^j \partial b^m} \widehat{K_1^2}(t,|\xi|, 0,0)
    \end{align*}
  appearing in Theorem \ref{Linear_Asym} in the special case $\sigma_1 = 0$ (corresponding to frictional damping) have been eliminated for all $j, m \in \mathbb{N}$. This will be explained in detail in the next section.
\end{remark}

\section{Proofs of main results} \label{section2}
\subsection{Preliminaries}
At the beginning of this section, let us apply the Fourier transform to \eqref{Main.Eq.1} to derive
\begin{align}
    \begin{cases}
        \widehat{u}_{tt} + |\xi|^{2\sigma} \widehat{u}+ \big(|\xi|^{2\sigma_1} + |\xi|^{2\sigma_2}\big) \widehat{u}_t = 0 &\xi \in \mathbb{R}^n, \, t > 0, \\
        \widehat{u}(0,\xi) = \widehat{u}_0(\xi), \quad \widehat{u}_t(0,\xi) = \widehat{u}_1(\xi) &\xi \in \mathbb{R}^n.
    \end{cases} \label{Main.Eq.1_Fourier}
\end{align}
The representation formula of solutions to (\ref{Main.Eq.1_Fourier}) reads as follows:
$$ \widehat{u}(t,\xi)= \widehat{K_0}(t,\xi) \widehat{u_0}(\xi) + \widehat{K_1}(t,\xi) \widehat{u_1}(\xi), $$
that is, the solutions to (\ref{Main.Eq.1}) can be written by
\begin{equation}
    u(t,x) = K_0(t,x) \ast_x u_0(x) + K_1(t,x) \ast_x u_1(x), \label{represen1}
\end{equation}
where
\begin{align}
   \widehat{K_0}(t,\xi) &:= \frac{\lambda_1(|\xi|) e^{\lambda_2(|\xi|) t}-\lambda_2(|\xi|) e^{\lambda_1(|\xi|)t}}{\lambda_1(|\xi|)-\lambda_2(|\xi|)} = \widehat{K_0^1}(t,|\xi|,1,1) - \widehat{K_0^2}(t,|\xi|,1,1), \label{repre1}\\  \widehat{K_1}(t,\xi) &:= \frac{e^{\lambda_1(|\xi|) t}-e^{\lambda_2(|\xi|) t}}{\lambda_1(|\xi|) -\lambda_2(|\xi|)} = \widehat{K_1^1}(t,|\xi|,1,1) - \widehat{K_1^2}(t,|\xi|,1,1), \label{repre2}
\end{align}
and the characteristic roots $\lambda_{1}(|\xi|) := \lambda_{1}^0(|\xi|,1,1)$ and $\lambda_2(|\xi|) := \lambda_2^0(|\xi|,1,1)$ are given by
\begin{equation*}
\lambda_{1,2}(|\xi|) =
\begin{cases}
    \f{1}{2}\left(-|\xi|^{2\sigma_1} - |\xi|^{2\sigma_2} \pm \sqrt{(|\xi|^{2\sigma_1} + |\xi|^{2\sigma_2})^{2} - 4 |\xi|^{2\sigma}}\right) & \text { if } |\xi| \in \mathbb{R}_+ \setminus \Omega, \\
    \\
     \f{1}{2}\left(-|\xi|^{2\sigma_1} - |\xi|^{2\sigma_2} \pm i \sqrt{4 |\xi|^{2\sigma}-(|\xi|^{2\sigma_1} + |\xi|^{2\sigma_2})^{2} }\right) & \text { if } |\xi| \in \Omega,  
\end{cases}
\end{equation*}
where $\Omega = \left\{r \in \mathbb{R}_+:  r^{2\sigma_1} +  r^{2\sigma_2} < 2 r^{\sigma} \right\}$. For simplicity, in the sequels let us denote
\begin{align*}
    & \widehat{K_0^1}(t,|\xi|)= \widehat{K_0^1}(t,|\xi|,1,1), \quad \widehat{K_0^2}(t,|\xi|)= \widehat{K_0^2}(t,|\xi|,1,1), \\
    & \widehat{K_1^1}(t,|\xi|)= \widehat{K_1^1}(t,|\xi|,1,1), \quad \widehat{K_1^2}(t,|\xi|)= \widehat{K_1^2}(t,|\xi|,1,1).
\end{align*}
Because of $0 \leq \sigma_1 <\sigma/2 < \sigma_2 \leq \sigma$, there exists a sufficiently small constant $\varepsilon^*>0$ such that
\begin{equation*}
    (0, \varepsilon^*)\cup \left(\frac{1}{\varepsilon^*},\ity\right) \subset \Omega.
\end{equation*}
Then, taking account of the cases of small and large frequencies separately we conclude that
\begin{align}
     &\lambda_{1} \sim -|\xi|^{2(\sigma-\sigma_1)},\quad 
    \lambda_{2} \sim -|\xi|^{2\sigma_1}, \quad \lambda_1-\lambda_2 \sim |\xi|^{2\sigma_1}\quad \text{ for } |\xi| \leq \varepsilon^*, \notag\\
    &\lambda_{1} \sim -|\xi|^{2(\sigma-\sigma_2)} ,\quad \lambda_{2} \sim -|\xi|^{2\sigma_2}, \quad \lambda_1-\lambda_2 \sim |\xi|^{2\sigma_2} \quad \text{ for }|\xi| \geq \frac{1}{\varepsilon^*}.\label{Rela1}
\end{align} 
Let $\chi_{p}= \chi_{p}(r) \in \mathcal{C}^{\infty}([0, \infty))$ with $p\in\{\rm L,H\}$ be smooth cut-off functions having the following properties:
\begin{align*}
&\chi_{\rm L}(r)=
\begin{cases}
1 &\quad \text{ if }r\le \varepsilon^*/2, \\
0 &\quad \text{ if }r\ge \varepsilon^*
\end{cases} 
\text{ and } \chi_{\rm H}(r)= 1- \chi_{\rm L}(r).
\end{align*}
It is obvious to see that $\chi_{\rm H}(r)= 1$ if $r \geq \varepsilon^*$ and $\chi_{\rm H}(r)= 0$ if $r \le \varepsilon^*/2$.
In order to prove Theorems \ref{Linear_Asym} and \ref{Linear_Asym_1}, the following auxiliary results come into play.
\begin{lemma}[Faà di Bruno’s formula]\label{FadiBruno'sformula}
Let $\omega_1(x) = f_1\big(g_1(x)\big)$ with $x \in \mathbb{R}$. Then, we have
\begin{align*}
 \frac{\partial^m}{\partial x^m}\omega_1(x)= \sum_{\beta_1+ 2 \beta_2+...+ m \beta_m =m} \frac{m!}{\beta_1! \beta_2! \cdots \beta_m!} f_1^{(\beta_1+\beta_2+\cdots+\beta_m)}\big(g_1(x)\big) \prod_{j=1}^m \bigg(\frac{1}{j!}\frac{\partial^j}{\partial x^j} g_1(x) \bigg)^{\beta_j}.
 \end{align*}
Furthermore, let $\omega(x_1, x_2)= f( g(x_1, x_2))$ with $(x_1, x_2)\in \mathbb{R}^2$. Then, the following formula holds for all $j, m \in \mathbb{N}$:
\begin{align*}
\frac{\partial^{j+m}}{\partial x_1^j \partial x_2^m} \omega(x_1, x_2) = \sum_{\ell=1}^{j+m} f^{(\ell)}(g(x_1, x_2)) \sum_{h=1}^{j+m} \sum_{\mathcal{S}_{\ell, h}^{j,m}( a,b)} j! m! \prod_{\rho = 1}^h \frac{1}{a_{\rho} !} \left(\frac{1}{b_{1, \rho}! b_{2, \rho}! }\frac{\partial^{b_{1,\rho}+b_{2, \rho}} }{\partial x_1^{b_{1, \rho}} \partial x_2^{b_{2, \rho}}}g(x_1, x_2)\right)^{a_{\rho}}, 
\end{align*}
where 
\begin{align*}
    \mathcal{S}_{ \ell, h}^{j,m} (a,b) := \bigg\{(a_{\rho}, b_{1,\rho}, b_{2,\rho}),  1 \leq \rho \leq h \text{ satisfying } a_{\rho} > 0,  \sum_{\rho = 1}^h a_{\rho} = \ell, \sum_{\rho = 1}^h a_{\rho} b_{1, \rho} = j, \sum_{\rho = 1}^h a_{\rho} b_{2, \rho} = m \\ \quad
     \text{ and } (b_{1, \rho_1}, b_{2, \rho_1}) \prec (b_{1, \rho_2}, b_{2, \rho_2}) \text{ for all } 1 \leq \rho_1 < \rho_2 \leq h\bigg\}.
\end{align*}
Here we write $ (b_{1, \rho_1}, b_{2, \rho_1}) \prec (b_{1, \rho_2}, b_{2, \rho_2})$ provided that one of the following holds:

\begin{itemize}
    \item [i)]  $b_{1, \rho_1} < b_{1, \rho_2}$,
    \item [ii)] $b_{1, \rho_1} = b_{1, \rho_2}$ and $b_{2, \rho_1} < b_{2, \rho_2}$.
\end{itemize}

\end{lemma}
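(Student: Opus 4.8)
The plan is to establish both identities by a single combinatorial mechanism — repeated application of the chain rule and the Leibniz (product) rule — and then to read off the stated coefficients as partition-counting numbers. I would treat the single-variable formula first, since it is exactly the special case of the bivariate one in which every differentiation is performed with respect to the same variable; it serves as a clean warm-up for the bookkeeping, and I would recover it at the end by specialisation rather than proving it separately.

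\emph{Step 1 (combinatorial chain rule).} Introduce $j+m$ formal ``atoms'', $j$ of them labelled $x_1$ and $m$ labelled $x_2$, and interpret $\partial^{j+m}/\partial x_1^j \partial x_2^m$ as differentiating $f(g)$ once for each atom. I would prove by induction on the number of atoms that
\begin{align*}
\frac{\partial^{j+m}}{\partial x_1^j \partial x_2^m} f(g) = \sum_{\pi} f^{(|\pi|)}(g) \prod_{B \in \pi} \frac{\partial^{|B|}}{\partial x_1^{b_1(B)} \partial x_2^{b_2(B)}} g,
\end{align*}
where the sum runs over all set partitions $\pi$ of the atom set, $|\pi|$ is the number of blocks, and each block $B$ contributes a mixed partial of $g$ of order $(b_1(B), b_2(B))$, namely the number of $x_1$-atoms and $x_2$-atoms in $B$. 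The inductive step is precisely the chain rule applied to $f^{(|\pi|)}(g)$ together with the product rule applied to the $g$-factors: differentiating by one further atom either opens a new singleton block, raising $|\pi|$ by one, or is absorbed into an existing block, raising its order.

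\emph{Step 2 (grouping by type and counting).} I would then collect the partitions $\pi$ according to their \emph{type}, i.e. according to how many blocks carry each distinct composition $(b_{1,\rho}, b_{2,\rho})$; call this multiplicity $a_\rho$, and let $h$ be the number of distinct compositions that occur. The constraints $\sum_\rho a_\rho = \ell$ (with $\ell = |\pi|$), $\sum_\rho a_\rho b_{1,\rho} = j$ and $\sum_\rho a_\rho b_{2,\rho} = m$ are forced by the atom and block counts, which is exactly the index set $\mathcal{S}_{\ell,h}^{j,m}(a,b)$. A direct multinomial count shows that the number of set partitions of a prescribed type equals
\begin{align*}
\frac{j!\,m!}{\prod_{\rho=1}^{h} a_\rho!\,(b_{1,\rho}!)^{a_\rho}(b_{2,\rho}!)^{a_\rho}} = j!\,m! \prod_{\rho=1}^{h} \frac{1}{a_\rho!}\left(\frac{1}{b_{1,\rho}!\,b_{2,\rho}!}\right)^{a_\rho},
\end{align*}
obtained by distributing the $j$ labelled $x_1$-atoms and the $m$ labelled $x_2$-atoms among the blocks (the factors $1/(b_{i,\rho}!)^{a_\rho}$) and then identifying the $a_\rho$ interchangeable blocks of each type (the factors $1/a_\rho!$). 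Substituting this count into Step 1 yields the bivariate formula; specialising to one variable, so that each block is characterised only by its size $j$ and $a_\rho = \beta_j$, reproduces the first identity.

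The main obstacle I expect is purely combinatorial bookkeeping rather than analysis: one must verify that the ordering relation $\prec$ on the pairs $(b_{1,\rho}, b_{2,\rho})$ lists each distinct type exactly once, so that the nested sums over $\ell$, $h$ and $\mathcal{S}_{\ell,h}^{j,m}(a,b)$ enumerate types with neither repetition nor omission, and that the multinomial count in Step 2 is exact, including the division by $a_\rho!$ for indistinguishable blocks. An alternative I would keep in reserve is a direct double induction on $(j,m)$: differentiate the claimed formula once in $x_1$, apply the chain and product rules, and match coefficients against the formula at $(j+1,m)$; this avoids partitions altogether but pushes all the difficulty into showing that a new factor $\partial_{x_1} g$ either creates a block of type $(1,0)$ or augments some existing $b_{1,\rho}$ with the correct compensating factorial weights.
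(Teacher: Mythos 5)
Your proof is correct, but it is worth noting that the paper does not prove this lemma at all: it simply remarks that the univariate formula is standard and that the bivariate one ``can be deduced from Theorem 2.1 in \cite{Constantine1996}''. What you have written is essentially a self-contained reconstruction of the Constantine--Savits argument in the two-variable case: the set-partition form of the higher chain rule (your Step 1, proved by induction on the number of differentiations, where each new derivative either creates a singleton block or enlarges an existing one) followed by the grouping of partitions by type and the multinomial count $j!\,m!\prod_{\rho}\frac{1}{a_\rho!}\bigl(\frac{1}{b_{1,\rho}!\,b_{2,\rho}!}\bigr)^{a_\rho}$ (your Step 2). Both steps are sound: the type-count is exact because the atoms are labelled, so distinct blocks of the same type have distinct contents and the division by $a_\rho!$ introduces no overcounting, and the lexicographic order $\prec$ imposed in $\mathcal{S}_{\ell,h}^{j,m}$ selects exactly one representative per multiset of types. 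Your derivation of the univariate formula by specialisation ($m=0$, blocks characterised by size, $a_\rho=\beta_j$) is also the cleanest way to get it and matches the stated coefficients. One small dividend of your route: the set-partition picture automatically forces every type to satisfy $(b_{1,\rho},b_{2,\rho})\neq(0,0)$, a restriction that the lemma's definition of $\mathcal{S}_{\ell,h}^{j,m}$ does not state explicitly but which is clearly intended. In short, you supply the details the paper delegates to a citation; relative to the paper your approach buys self-containedness at the cost of the combinatorial bookkeeping you correctly identify as the only real work.
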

The first formula of Lemma \ref{FadiBruno'sformula} appears quite frequently in the recent papers (e.g., \cite{DaoReissig2019_1, Michihisa2021}), while the second one can be deduced from Theorem 2.1 in \cite{Constantine1996}.

%..........................................................................
\subsection{Estimates for low frequencies}
By Lemma \ref{FadiBruno'sformula}, we are going to prove the following lemmas.
\begin{lemma}\label{Lemma2.2}
Let $j ,m \in \mathbb{N}$ and $0 \leq \sigma_1 < \sigma/2 < \sigma_2 \leq \sigma$. Then, we have the following estimates:
    \begin{align}
        \left|\frac{\partial^{j}}{\partial a^j} (\Gamma_1(|\xi|,a))^\alpha\right| &\lesssim |\xi|^{2j (\sigma_2-\sigma_1)},\label{estimate2.2.1} \\
        \left|\frac{\partial^{j+m}}{\partial a^j \partial b^m} \Gamma_2(|\xi|,a,b)\right| &\lesssim |\xi|^{2j(\sigma_2-\sigma_1)+2m(\sigma-2\sigma_1)}, \label{estimate2.2.2}
    \end{align}
    for all $ (|\xi|, a,b) \in (0, \varepsilon^*]\times [0,1] \times [0,1]$ and $\alpha > 0$.
      Moreover, there exist some constants $C_{\alpha, j}, C_{j,m}$ satisfying
    \begin{align}
        \frac{\partial^j}{\partial a^j} (\Gamma_1(|\xi|,a))^{\alpha} \bigg|_{a = 0} &= C_{\alpha, j} |\xi|^{2j(\sigma_2-\sigma_1)}, \label{Re2.2.1}\\
        \frac{\partial^{j+m}}{\partial a^j \partial b^m} \Gamma_2(|\xi|, a, b)\bigg|_{(a,b) = (0,0)} &= C_{j,m} |\xi|^{2j(\sigma_2-\sigma_1)+2m(\sigma-2\sigma_1)}. \label{Re2.2.2}
    \end{align}
\end{lemma}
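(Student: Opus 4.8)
The plan is to exploit a scaling structure that makes all four estimates fall out at once, reserving Fa\`a di Bruno's formula only as an alternative bookkeeping device. Set $s:=2(\sigma_2-\sigma_1)>0$ and $\tau:=2(\sigma-2\sigma_1)>0$ (both positive since $\sigma_1<\sigma/2$), and introduce the scaled variables $u:=a|\xi|^{s}$ and $v:=b|\xi|^{\tau}$. The key remark I would record first is that
\[
(\Gamma_1(|\xi|,a))^\alpha=(1+u)^{-\alpha}=:H_\alpha(u),\qquad
\Gamma_2(|\xi|,a,b)=\bigl(1-4v(1+u)^{-2}\bigr)^{1/2}=:F(u,v),
\]
so that $|\xi|$ enters only as a constant multiplier inside $u$ and $v$. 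Consequently each $\partial_a$ pulls out exactly one factor $|\xi|^{s}$ and each $\partial_b$ exactly one factor $|\xi|^{\tau}$, giving the scaling identities $\partial_a^j(\Gamma_1)^\alpha=|\xi|^{sj}H_\alpha^{(j)}(u)$ and $\partial_a^j\partial_b^m\Gamma_2=|\xi|^{sj+\tau m}(\partial_u^j\partial_v^m F)(u,v)$. Everything then reduces to bounding the outer derivatives on the range $u\in[0,(\varepsilon^*)^{s}]$, $v\in[0,(\varepsilon^*)^{\tau}]$ traced out by $(|\xi|,a,b)\in(0,\varepsilon^*]\times[0,1]\times[0,1]$.

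For \eqref{estimate2.2.1} and \eqref{Re2.2.1} I would simply use the closed form $H_\alpha^{(j)}(u)=(-1)^j\frac{\Gamma(\alpha+j)}{\Gamma(\alpha)}(1+u)^{-\alpha-j}$, whose modulus is at most $\frac{\Gamma(\alpha+j)}{\Gamma(\alpha)}$ for $u\geq0$ (as $\alpha+j>0$); the scaling identity then yields the upper bound immediately, and setting $a=0$ (hence $u=0$) produces the exact constant $C_{\alpha,j}=(-1)^j\Gamma(\alpha+j)/\Gamma(\alpha)$.

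For \eqref{estimate2.2.2} and \eqref{Re2.2.2} the same scaling identity shows it suffices that $F$ be smooth with bounded derivatives near the origin. This is where the only real care is needed: I would verify that $1-4v(1+u)^{-2}\in[1-4(\varepsilon^*)^{\tau},1]$ on the relevant box (using $0\le(1+u)^{-2}\le1$), and shrink $\varepsilon^*$ so that $4(\varepsilon^*)^{\tau}<1$, keeping the radicand bounded away from $0$. Then $F\in C^\infty$ near the compact box, every $\partial_u^j\partial_v^m F$ is continuous hence bounded there, so $|\partial_a^j\partial_b^m\Gamma_2|\lesssim|\xi|^{sj+\tau m}$, and evaluation at $(a,b)=(0,0)$, i.e. $(u,v)=(0,0)$, gives \eqref{Re2.2.2} with $C_{j,m}=(\partial_u^j\partial_v^m F)(0,0)$.

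The hard part, such as it is, is therefore not the differentiation but the uniform control of the square root: one must pin down $\varepsilon^*$ so the discriminant-type quantity under $\Gamma_2$ stays strictly positive on the whole low-frequency box. If one instead prefers the route advertised by Lemma \ref{FadiBruno'sformula}, I would apply the two-variable formula to $\Gamma_2=f(g)$ with $f(x)=\sqrt{x}$ and $g=1-4b|\xi|^{\tau}(\Gamma_1)^2$; here $g$ is affine in $b$, so only factors with $b_{2,\rho}\in\{0,1\}$ contribute, and by \eqref{estimate2.2.1} each surviving factor carries a power of $|\xi|$ at least $s\,b_{1,\rho}+\tau\,b_{2,\rho}$. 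Summing against the constraints $\sum_\rho a_\rho b_{1,\rho}=j$ and $\sum_\rho a_\rho b_{2,\rho}=m$ gives total power at least $sj+\tau m$, which (since $|\xi|\le\varepsilon^*\le1$) reproduces \eqref{estimate2.2.2}; and since every factor with $b_{2,\rho}=0,\ b_{1,\rho}\ge1$ is proportional to $b$ and so vanishes at $b=0$, only the terms of exact power $sj+\tau m$ survive evaluation at $(0,0)$, reproducing \eqref{Re2.2.2}.
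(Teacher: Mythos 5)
Your proof is correct, and for the $\Gamma_2$ part it takes a genuinely different and cleaner route than the paper. The paper handles \eqref{estimate2.2.1} and \eqref{Re2.2.1} exactly as you do, via the closed form of $\partial_a^j\big(1+a|\xi|^{2(\sigma_2-\sigma_1)}\big)^{-\alpha}$. For \eqref{estimate2.2.2} and \eqref{Re2.2.2}, however, the paper first computes $\partial_b^m\Gamma_2$ explicitly (producing a factor $(\Gamma_1)^{2m}(\Gamma_2)^{1-2m}$), then applies Leibniz in $a$ together with the one-variable Fa\`a di Bruno formula to $\partial_a^h(\Gamma_2)^{1-2m}$, reading off the powers of $|\xi|$ term by term and checking that evaluation at $(a,b)=(0,0)$ kills every term except $h=0$. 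Your substitution $u=a|\xi|^{2(\sigma_2-\sigma_1)}$, $v=b|\xi|^{2(\sigma-2\sigma_1)}$ packages the same homogeneity into a one-line scaling identity and replaces all of that combinatorics by the observation that a fixed smooth function $F(u,v)$ has bounded derivatives on a compact box; this is shorter, gives uniformity in $(a,b)\in[0,1]^2$ for free, and makes explicit a point the paper leaves implicit, namely that $\varepsilon^*$ must be taken small enough that the radicand $1-4v(1+u)^{-2}$ stays bounded away from zero (without this the paper's own factor $(\Gamma_2)^{1-2m}$ would be uncontrolled). What the paper's computation buys in exchange is an explicit intermediate bound on $\partial_a^h(\Gamma_2)^{1-2m}$ and the precise vanishing structure at $a=0$, which it reuses in the proof of Lemma \ref{lemma2.3}; your closing Fa\`a di Bruno sketch recovers essentially that computation, including the correct power count $2j(\sigma_2-\sigma_1)+2m(\sigma-2\sigma_1)$ and the survival of only the exact-power terms at $(a,b)=(0,0)$. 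I see no gaps.
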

\begin{proof}
     At first, with $j \geq 1$, we can see that
    \begin{align*}
        \frac{\partial^j}{\partial a^j} (\Gamma_1(|\xi|,a))^{\alpha} &= \frac{\partial^j}{\partial a^j} \left(1+a|\xi|^{2(\sigma_2-\sigma_1)}\right)^{-\alpha}\\
        &=(-\alpha)(-\alpha-1)\cdots(-\alpha-j+1) |\xi|^{2j(\sigma_2-\sigma_1)}\left(1+a|\xi|^{2(\sigma_2-\sigma_1)}\right)^{-\alpha-j}, 
    \end{align*}
    which gives the conclusions (\ref{estimate2.2.1}) and (\ref{Re2.2.1}) for all $j \geq 0$. Next, we will prove (\ref{estimate2.2.2}) and (\ref{Re2.2.2}). Due to the fact that
    \begin{align*}
        &\frac{\partial^m}{\partial b^m} \Gamma_2(|\xi|,a,b) \\
        &\qquad = \prod_{h=0}^{m-1}(1/2-h) (-4)^m |\xi|^{2m(\sigma-2\sigma_1)} (\Gamma_1 (|\xi|,a))^{2m} \left(1-4b|\xi|^{2(\sigma-2\sigma_1)} (\Gamma_1(|\xi|,a))^2\right)^{1/2-m}\\
        &\qquad = \prod_{h=0}^{m-1}(1/2-h)(-4)^m |\xi|^{2m(\sigma-2\sigma_1)} (\Gamma_1 (|\xi|,a))^{2m} (\Gamma_2 (|\xi|,a, b))^{1-2m}
    \end{align*}
    for all $m \geq 1$, this leads to
    \begin{align}
        &\frac{\partial^{j+m}}{\partial a^j \partial b^m} \Gamma_2(|\xi|, a,b) \nonumber \\
        &\qquad = C_m |\xi|^{2m(\sigma-2\sigma_1)} \sum_{h=0}^j \frac{j!}{h! (j-h)!} \frac{\partial^{j-h}}{\partial a^{j-h}} (\Gamma_1(|\xi|,a))^{2m} \frac{\partial^h}{\partial a^h} (\Gamma_2(|\xi|,a, b))^{1-2m}. \label{Re2.2.3} 
    \end{align}
    Applying the first formula of Lemma \ref{FadiBruno'sformula} we arrive at
    \begin{align*}
        &\frac{\partial^h}{\partial a^h} (\Gamma_2(|\xi|,a, b))^{1-2m} \\
        &\qquad = \sum_{\gamma_1 + 2\gamma_2+\cdots+ h \gamma_h = h} \frac{h!}{\gamma_1 ! \gamma_2 ! \cdots \gamma_h !} \frac{\partial^{\sum_{\rho =1}^h \gamma_{\rho}}}{\partial q^{\sum_{\rho=1}^h \gamma_{\rho}}} (1-4q)^{1/2-m} \bigg|_{q = b|\xi|^{2(\sigma-2\sigma_1)}(\Gamma_1(|\xi|,a))^2} \\
        &\qquad\qquad \times \prod_{\rho= 1}^h \left(\frac{1}{\rho !}  b|\xi|^{2(\sigma-2\sigma_1)}\frac{\partial^{\rho}}{\partial a^{\rho}} (\Gamma_1(|\xi|,a))^2 \right)^{\gamma_{\rho}}.
    \end{align*}
    Therefore, we obtain the following relation:
    \begin{align*}
        \frac{\partial^h}{\partial a^h} (\Gamma_2(|\xi|,a, b))^{1-2m} \bigg|_{(a,b) = (0,0)} = 
        \begin{cases}
        1 &\text{ if } h = 0 \\
            0 &\text{ if } h \geq 1.
        \end{cases}
    \end{align*}
    Combining this with the estimate (\ref{estimate2.2.1}) for $\alpha = 2$ leads to
\begin{align*}
    \left|\frac{\partial^h}{\partial a^h} (\Gamma_2 (|\xi|,a,b))^{1-2m}\right| \lesssim |\xi|^{2h(\sigma_2-\sigma_1)}
\end{align*}
 for all  $(|\xi|,a,b) \in (0, \varepsilon^*] \times [0,1] \times [0,1]$  and $ h \geq 0$. By applying again the estimate (\ref{estimate2.2.1}) for $\alpha = 2m$ and the above relations to (\ref{Re2.2.3}), we derive (\ref{estimate2.2.2}) and (\ref{Re2.2.2}). Thus, Lemma \ref{Lemma2.2} has been established.
\end{proof}
\begin{lemma}\label{lemma2.3}
    Let $j ,m \in \mathbb{N}$ and $0 \leq \sigma_1 < \sigma/2 < \sigma_2 \leq \sigma$. Then, we have the following estimates:
    \begin{align}
        \left|\frac{\partial^{j+m}}{\partial a^j \partial b^m} \lambda_1^0(|\xi|,a,b)\right| &\lesssim |\xi|^{2(\sigma-\sigma_1)+2j (\sigma_2-\sigma_1)+2m(\sigma-2\sigma_1)},\label{estimate2.3.1} \\
        \left|\frac{\partial^{j+m}}{\partial a^j \partial b^m} e^{\lambda_1^0(|\xi|,a,b) t}\right| &\lesssim e^{-c|\xi|^{2(\sigma-\sigma_1)}t}|\xi|^{2j(\sigma_2-\sigma_1)+2m(\sigma-2\sigma_1)}, \label{estimate2.3.2}\\
        \left|\frac{\partial^{j+m}}{\partial a^j \partial b^m} \lambda_2^0(|\xi|,a,b)\right| &\lesssim |\xi|^{2\sigma_1 + 2j(\sigma_2-\sigma_1)+2m(\sigma-2\sigma_1)} \label{estimate2.3.3},\\
        \left|\frac{\partial^{j+m}}{\partial a^j \partial b^m} e^{\lambda_2^0(|\xi|, a,b)t}\right| &\lesssim e^{-c|\xi|^{2\sigma_1}t} |\xi|^{2j(\sigma_2-\sigma_1)+2m(\sigma-2\sigma_1)}, \label{estimate2.3.4}
    \end{align}
    for all $ (|\xi|, a,b) \in (0, \varepsilon^*]\times [0,1] \times [0,1]$ and $c$ is a suitable positive constant.
      Moreover, there exist some constants $C_{1, j,m}, C_{2,j,m}, C_{1,h,j,m}^*, C_{2, h,j,m}^*$ satisfying
    \begin{align}
        \frac{\partial^{j+m}}{\partial a^j \partial b^m} \lambda_1^0(|\xi|,a,b)\bigg|_{(a,b) = (0,0)} &= C_{1,j,m}|\xi|^{2(\sigma-\sigma_1)+2j (\sigma_2-\sigma_1)+2m(\sigma-2\sigma_1)}, \label{Re2.3.1}\\
        \frac{\partial^{j+m}}{\partial a^j \partial b^m} e^{\lambda_1^0(|\xi|,a,b) t} \bigg|_{(a,b) = (0,0)} &=  e^{-|\xi|^{2(\sigma-\sigma_1)}t} |\xi|^{2j(\sigma_2-\sigma_1)+2m(\sigma-2\sigma_1)} \sum_{h=1}^{j+m} C_{1,h,j,m}^* (|\xi|^{2(\sigma-\sigma_1)}t )^h, \label{Re2.3.2} \\
        \frac{\partial^{j+m}}{\partial a^j \partial b^m} \lambda_2^0(|\xi|,a,b)\bigg|_{(a,b) = (0,0)} &= C_{2,j,m}|\xi|^{2\sigma_1+2j (\sigma_2-\sigma_1)+2m(\sigma-2\sigma_1)}, \label{Re2.3.3}\\
        \frac{\partial^{j+m}}{\partial a^j \partial b^m} e^{\lambda_2^0(|\xi|,a,b) t} \bigg|_{(a,b) = (0,0)} &=  e^{-|\xi|^{2\sigma_1}t} |\xi|^{2j(\sigma_2-\sigma_1)+2m(\sigma-2\sigma_1)} \sum_{h=1}^{j+m} C_{2,h,j,m}^* (|\xi|^{2\sigma_1}t )^h. \label{Re2.3.4}
    \end{align}
\end{lemma}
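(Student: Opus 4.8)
The plan is to establish the four algebraic identities and estimates for $\lambda_1^0$ and $\lambda_2^0$ first, and then to bootstrap these to the exponentials $e^{\lambda_1^0 t}$ and $e^{\lambda_2^0 t}$ through a second application of Faà di Bruno's formula (Lemma \ref{FadiBruno'sformula}) with the outer function $f(y)=e^{ty}$. Throughout, the organizing principle is that a $b$-derivative can only fall on the factor $\Gamma_2$, producing a gain of $|\xi|^{2(\sigma-2\sigma_1)}$, whereas an $a$-derivative produces a gain of $|\xi|^{2(\sigma_2-\sigma_1)}$; Lemma \ref{Lemma2.2} already records exactly these two gains for $\Gamma_1^\alpha$ and for $\Gamma_2$, both as inequalities and as exact homogeneous values at $(a,b)=(0,0)$.

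First I would treat $\lambda_1^0 = -2|\xi|^{2(\sigma-\sigma_1)}\Gamma_1(1+\Gamma_2)^{-1}$. Applying the Leibniz rule distributes $\partial_a^{j}\partial_b^{m}$ across the two factors $\Gamma_1(|\xi|,a)$ and $(1+\Gamma_2(|\xi|,a,b))^{-1}$; since $\Gamma_1$ is independent of $b$, all $b$-derivatives land on the second factor. To differentiate $(1+\Gamma_2)^{-1}$ I would write it as $h(\Gamma_2)$ with $h(y)=(1+y)^{-1}$ and invoke the two-variable Faà di Bruno formula, so that every resulting term is a product of derivative blocks $\partial_a^{b_{1,\rho}}\partial_b^{b_{2,\rho}}\Gamma_2$. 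Estimate \eqref{estimate2.2.2} bounds each such block by $|\xi|^{2b_{1,\rho}(\sigma_2-\sigma_1)+2b_{2,\rho}(\sigma-2\sigma_1)}$, and the constraints $\sum_\rho a_\rho b_{1,\rho}=j'$, $\sum_\rho a_\rho b_{2,\rho}=m$ force the exponents to add up to $2j'(\sigma_2-\sigma_1)+2m(\sigma-2\sigma_1)$ on the $(1+\Gamma_2)^{-1}$ factor, where $j'$ is the number of $a$-derivatives routed there; combined with the $|\xi|^{2(j-j')(\sigma_2-\sigma_1)}$ from \eqref{estimate2.2.1} on $\Gamma_1$ and the prefactor $|\xi|^{2(\sigma-\sigma_1)}$, this gives exactly \eqref{estimate2.3.1}. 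For the identity \eqref{Re2.3.1} I would instead insert the exact homogeneous values \eqref{Re2.2.1}, \eqref{Re2.2.2} at $(a,b)=(0,0)$, together with $\Gamma_1(|\xi|,0)=\Gamma_2(|\xi|,0,0)=1$, so that the whole expression collapses to a single constant multiple of the claimed monomial. The root $\lambda_2^0=-\tfrac{1}{2}|\xi|^{2\sigma_1}(1+a|\xi|^{2(\sigma_2-\sigma_1)})(1+\Gamma_2)$ is handled the same way, the only difference being that the polynomial factor $1+a|\xi|^{2(\sigma_2-\sigma_1)}$ has at most one nonzero $a$-derivative (contributing $|\xi|^{2(\sigma_2-\sigma_1)}$), which again fits the bookkeeping and yields \eqref{estimate2.3.3} and \eqref{Re2.3.3}.

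For the exponential estimates \eqref{estimate2.3.2} and \eqref{estimate2.3.4} I would apply the two-variable Faà di Bruno formula once more, now with $f(y)=e^{ty}$ and $g=\lambda_1^0$ (respectively $\lambda_2^0$). Since $f^{(\ell)}(y)=t^\ell e^{ty}$, each term carries a factor $t^\ell e^{\lambda_1^0 t}$ times a product of $a_\rho$-th powers of derivatives of $\lambda_1^0$. Bounding those derivatives by \eqref{estimate2.3.1} and using $\sum_\rho a_\rho=\ell$, $\sum_\rho a_\rho b_{1,\rho}=j$, $\sum_\rho a_\rho b_{2,\rho}=m$, the product contributes $|\xi|^{2\ell(\sigma-\sigma_1)+2j(\sigma_2-\sigma_1)+2m(\sigma-2\sigma_1)}$. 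Since $\Gamma_1(1+\Gamma_2)^{-1}$ is bounded below by a positive constant on $(0,\varepsilon^*]\times[0,1]\times[0,1]$ we have $\lambda_1^0\le -c|\xi|^{2(\sigma-\sigma_1)}$, hence $e^{\lambda_1^0 t}\le e^{-c|\xi|^{2(\sigma-\sigma_1)}t}$, and the standard absorption $\big(|\xi|^{2(\sigma-\sigma_1)}t\big)^\ell e^{-c|\xi|^{2(\sigma-\sigma_1)}t}\lesssim e^{-c'|\xi|^{2(\sigma-\sigma_1)}t}$ disposes of the surplus factor $t^\ell|\xi|^{2\ell(\sigma-\sigma_1)}$, leaving precisely \eqref{estimate2.3.2}; the case of $\lambda_2^0$ is identical with $-|\xi|^{2\sigma_1}$ in place of $-|\xi|^{2(\sigma-\sigma_1)}$. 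For the exact expansions \eqref{Re2.3.2} and \eqref{Re2.3.4} I would evaluate at $(a,b)=(0,0)$, where $\lambda_1^0=-|\xi|^{2(\sigma-\sigma_1)}$ and each derivative block is replaced by its homogeneous value \eqref{Re2.3.1}; collecting terms by the power $h=\ell$ of $t$ then produces the finite sum $\sum_{h=1}^{j+m}C^*_{1,h,j,m}(|\xi|^{2(\sigma-\sigma_1)}t)^h$ with the prefactor $e^{-|\xi|^{2(\sigma-\sigma_1)}t}|\xi|^{2j(\sigma_2-\sigma_1)+2m(\sigma-2\sigma_1)}$, which is exactly the claimed form.

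The main obstacle is purely the combinatorial power-counting: one must verify that, term by term in Faà di Bruno's formula, the constraints on the multi-indices $(a_\rho,b_{1,\rho},b_{2,\rho})$ force the $|\xi|$-exponents contributed by the individual derivative blocks to sum to the single homogeneous exponent on the right-hand sides. This is where the homogeneity statements \eqref{Re2.2.1}, \eqref{Re2.2.2} (rather than the mere inequalities) are essential: they guarantee that no lower-order powers of $|\xi|$ survive at $(a,b)=(0,0)$, so that the evaluated derivatives are genuine monomials and the expansions \eqref{Re2.3.1}--\eqref{Re2.3.4} hold as exact identities and not merely as estimates. The remaining subtlety is to confirm that the lower bounds $\lambda_1^0\le -c|\xi|^{2(\sigma-\sigma_1)}$ and $\lambda_2^0\le -c|\xi|^{2\sigma_1}$ hold uniformly in $(a,b)\in[0,1]\times[0,1]$, which follows directly from the explicit factorizations together with $0<\Gamma_1\le 1$ and $0\le\Gamma_2\le 1$ on the low-frequency range.
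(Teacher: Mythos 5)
Your proposal is correct and follows essentially the same route as the paper: Leibniz's rule on the explicit factorizations of $\lambda_1^0$ and $\lambda_2^0$, the two-variable Fa\`a di Bruno formula applied to $(1+\Gamma_2)^{-1}$ and then to the exponentials, with the power-counting driven by the homogeneity identities \eqref{Re2.2.1}--\eqref{Re2.2.2} of Lemma \ref{Lemma2.2} and the final absorption of the polynomial factor $(|\xi|^{2(\sigma-\sigma_1)}t)^{\ell}$ into the exponential. The only cosmetic difference is that you take the outer function $f(y)=e^{ty}$ whereas the paper takes $f(q)=e^{q}$ with $q=\lambda^0 t$, which is equivalent.
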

\begin{proof}
    First, we recall the representation of the term $\lambda_1^0(|\xi|,a,b)$ as follows:
    \begin{align*}
        \lambda_1^0(|\xi|,a,b) = -2|\xi|^{2(\sigma-\sigma_1)} \Gamma_1(|\xi|,a) (1+ \Gamma_2(|\xi|,a,b))^{-1}.
    \end{align*}
    Using Leibniz's formula one arrives at
    \begin{align}
        &\frac{\partial^{j+m}}{\partial a^j \partial b^m} \lambda_1^0(|\xi|,a,b) \nonumber \\
        &\qquad = -2|\xi|^{2(\sigma-\sigma_1)} \sum_{h=0}^j \frac{j!}{h!(j-h)!} \frac{\partial^{j-h}}{\partial a^{j-h}} \Gamma_1(|\xi|,a) \frac{\partial^{h+m}}{\partial a^h \partial b^m} (1+\Gamma_2(|\xi|, a,b))^{-1}. \label{Re2.3.5}
    \end{align}
    We employ the second formula of Lemma \ref{FadiBruno'sformula} with $h+m \geq 1$ to obtain
\begin{align*}
&\frac{\partial^{h+m}}{\partial a^h \partial b^m} (1+\Gamma_2(|\xi|,a,b))^{-1} \\
&\qquad= \sum_{\ell=1}^{h+m} \frac{\partial^{\ell}}{\partial q^{\ell}} (1+q)^{-1}\bigg|_{q= \Gamma_2(|\xi|,a,b)}\\
&\hspace{3cm} \times \sum_{\theta=1}^{h+m} \sum_{\mathcal{S}_{\ell, \theta}^{h,m}( \beta,\gamma)} h! m! \prod_{\rho = 1}^\theta \frac{1}{\beta_{\rho} !} \left(\frac{1}{\gamma_{1, \rho}! \gamma_{2, \rho}! }\frac{\partial^{\gamma_{1,\rho}+\gamma_{2, \rho}} }{\partial a^{\gamma_{1, \rho}} \partial b^{\gamma_{2, \rho}}}\Gamma_2(|\xi|, a,b)\right)^{\beta_{\rho}}, 
\end{align*}
where $\mathcal{S}_{\ell, \theta}^{h,m}(\beta, \gamma)$ is defined as in Lemma \ref{FadiBruno'sformula}.
Using the estimate (\ref{estimate2.2.2}) and the relation (\ref{Re2.2.2}) for the term
\begin{align*}
    \displaystyle\frac{\partial^{\gamma_{1,\rho}+\gamma_{2, \rho}} }{\partial a^{\gamma_{1, \rho}} \partial b^{\gamma_{2, \rho}}}\Gamma_2(|\xi|, a,b),
\end{align*}
we achieve
\begin{align*}
    \left|\frac{\partial^{h+m}}{\partial a^h \partial b^m} (1+ \Gamma_2(|\xi|,a,b))^{-1}\right| &\lesssim |\xi|^{2h(\sigma_2-\sigma_1)+2m(\sigma-2\sigma_1)},\\
    \frac{\partial^{h+m}}{\partial a^h \partial b^m} (1+ \Gamma_2(|\xi|,a,b))^{-1}\bigg|_{(a,b)=(0,0)} &= C_{h,m}^{'} |\xi|^{2h(\sigma_2-\sigma_1)+2m(\sigma-2\sigma_1)}.
\end{align*}
Here, we note that these relations still hold when $(h, m) = (0, 0)$. By combining them with (\ref{estimate2.2.1}) and (\ref{Re2.2.1}) for $\alpha = 1$, we derive (\ref{estimate2.3.1}) and (\ref{Re2.3.1}) from the relation (\ref{Re2.3.5}). Next, to indicate (\ref{estimate2.3.2}) and (\ref{Re2.3.2}) we apply again the second formula of Lemma \ref{FadiBruno'sformula} to get
\begin{align*}
    &\frac{\partial^{j+m}}{\partial a^j \partial b^m} e^{\lambda_1^0(|\xi|,a,b)t} \\
    &\quad= \sum_{\ell = 1}^{j+m} \frac{\partial^{\ell}}{\partial q ^{\ell}} e^q\bigg|_{q= \lambda_1^0(|\xi|,a,b)t}\sum_{\theta=1}^{j+m} \sum_{\mathcal{S}_{\ell, \theta}^{j,m}( \beta,\gamma)} j! m! \prod_{\rho = 1}^\theta \frac{1}{\beta_{\rho} !} \left(\frac{1}{\gamma_{1, \rho}! \gamma_{2, \rho}! }\frac{\partial^{\gamma_{1,\rho}+\gamma_{2, \rho}} }{\partial a^{\gamma_{1, \rho}} \partial b^{\gamma_{2, \rho}}}\lambda_1^0(|\xi|, a,b) t\right)^{\beta_{\rho}}.
\end{align*}
By combining this with (\ref{Re2.3.1}), one immediately finds
\begin{align*}
    &\frac{\partial^{j+m}}{\partial a^j \partial b^m} e^{\lambda_1^0(|\xi|,a,b)t} \bigg|_{(a,b) = (0,0)} \\
    \vspace{0.3cm}
    &\qquad= C_{j,m} e^{-|\xi|^{2(\sigma-\sigma_1)}t} |\xi|^{2j(\sigma_2-\sigma_1)+2m(\sigma-2\sigma_1)}\\
    &\hspace{2cm}\times \sum_{\ell=1}^{j+m} \sum_{\theta = 1}^{j+m} \sum_{\mathcal{S}_{\ell, \theta}^{j,m}( \beta, \gamma)} j! m! \left(|\xi|^{2(\sigma-\sigma_1)}t\right)^{\sum_{\rho=1}^\theta \beta_{\rho}}\prod_{\rho = 1}^\theta \frac{1}{\beta_{\rho} !} \left(\frac{1}{\gamma_{1, \rho}! \gamma_{2, \rho}! } \right)^{\beta_{\rho}}\\
    &\qquad= e^{-|\xi|^{2(\sigma-\sigma_1)}t} |\xi|^{2j(\sigma_2-\sigma_1)+2m(\sigma-2\sigma_1)} \sum_{\ell=1}^{j+m} C_{l,j,m} \left(|\xi|^{2(\sigma-\sigma_1)}t\right)^{\ell}.
\end{align*}
Moreover, from the estimate (\ref{estimate2.3.1}) we derive
\vspace{0.3cm}
\begin{align*}
    &\left|\frac{\partial^{j+m}}{\partial a^j \partial b^m} e^{\lambda_1^0(|\xi|,a,b)t}\right|\\
    &\qquad\lesssim e^{-c_1 |\xi|^{2(\sigma-\sigma_1)}t} |\xi|^{2j(\sigma_2-\sigma_1)+2m(\sigma-2\sigma_1)}\\
    &\hspace{2cm}\times \sum_{\ell=1}^{j+m} \sum_{\theta = 1}^{j+m} \sum_{\mathcal{S}_{\ell, \theta}^{j,m}( \beta, \gamma)} j! m! \left(|\xi|^{2(\sigma-\sigma_1)}t\right)^{\sum_{\rho=1}^\theta \beta_{\rho}}\prod_{\rho = 1}^\theta \frac{1}{\beta_{\rho} !} \left(\frac{1}{\gamma_{1, \rho}! \gamma_{2, \rho}! }\right)^{\beta_{\rho}}\\
    &\qquad\lesssim e^{-c|\xi|^{2(\sigma-\sigma_1)}t} |\xi|^{2j(\sigma_2-\sigma_1)+2m(\sigma-2\sigma_1)}
\end{align*}
for all $(|\xi|, a, b) \in (0, \varepsilon^*] \times [0,1] \times [0,1]$, where $c$ and $c_1$ are suitable positive constants. Thus, it follows (\ref{estimate2.3.2}) and (\ref{Re2.3.2}). To prove the properties of the functions containing the term $\lambda_2^0(|\xi|,a,b)$, we use Leibniz's formula for $\lambda_2^0(|\xi|,a,b)$ in this way
\begin{align*}
    &\frac{\partial^{j+m}}{\partial a^j \partial b^m} \lambda_2^0(|\xi|,a,b) \\
    &\qquad = -\frac{|\xi|^{2\sigma_1}}{2} \sum_{h = 0}^{j} \frac{j!}{h! (j-h)!} \frac{\partial^h}{\partial a^h} \left(1+ a|\xi|^{2(\sigma_2-\sigma_1)}\right) \frac{\partial^{j+m-h}}{\partial a^{j-h} \partial b^m} (1+\Gamma_2(|\xi|,a,b))\\
    &\qquad = -\frac{|\xi|^{2\sigma_1}}{2} \bigg( \left(1+a|\xi|^{2(\sigma_2-\sigma_1)}\right) \frac{\partial^{j+m}}{\partial a^j \partial b^m} \left(1+\Gamma_2(|\xi|,a,b)\right) \\
    &\hspace{5cm} + j|\xi|^{2(\sigma_2-\sigma_1)} \frac{\partial^{j+m-1}}{\partial a^{j-1} \partial b^m} (1+\Gamma_2(|\xi|,a,b))\bigg).
\end{align*}
From this, we immediately obtain (\ref{estimate2.3.3}) and (\ref{Re2.3.3}). Then, the relations (\ref{estimate2.3.4}) and (\ref{Re2.3.4}) are proved in the same way as (\ref{estimate2.3.2}) and (\ref{Re2.3.2}). Hence, the proof of Lemma \ref{lemma2.3} is complete.
\end{proof}
By performing proof steps similar to Lemma \ref{lemma2.3}, we also obtain the following lemma.
\begin{lemma}\label{lemma2.4}
    Let $j,m \in \mathbb{N}$ and $0 \leq \sigma_1 < \sigma/2 < \sigma_2 \leq \sigma$. Then, we have the following estimate:
    \begin{align*}
\left|\frac{\partial^{j+m}}{\partial a^j \partial b^m} (G(|\xi|,a,b))^{-1}\right| &\lesssim |\xi|^{-2\sigma_1+ 2j(\sigma_2-\sigma_1)+2m(\sigma-2\sigma_1)},
    \end{align*}
    for all $ (|\xi|, a,b) \in (0, \varepsilon^*]\times [0,1] \times [0,1]$.
      Moreover, there exist some constant $C_{j,m}$ satisfying
      \begin{align*}
\frac{\partial^{j+m}}{\partial a^j \partial b^m} (G(|\xi|,a,b))^{-1}\bigg|_{(a,b)=(0,0)} &= C_{j,m} |\xi|^{-2\sigma_1+ 2j(\sigma_2-\sigma_1)+2m(\sigma-2\sigma_1)}.
    \end{align*}
\end{lemma}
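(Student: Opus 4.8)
The plan is to mimic the proof of Lemma \ref{lemma2.3}, exploiting the factorization of $G$ recorded in its definition. Since $\Gamma_1(|\xi|,a)=\left(1+a|\xi|^{2(\sigma_2-\sigma_1)}\right)^{-1}$, we have $G=|\xi|^{2\sigma_1}\Gamma_1^{-1}\Gamma_2$, and hence
\begin{align*}
(G(|\xi|,a,b))^{-1} = |\xi|^{-2\sigma_1}\,\Gamma_1(|\xi|,a)\,(\Gamma_2(|\xi|,a,b))^{-1}.
\end{align*}
Because $|\xi|^{-2\sigma_1}$ is independent of $a$ and $b$, it factors out of every derivative, so the task reduces to estimating the mixed derivatives of the product $\Gamma_1\,\Gamma_2^{-1}$. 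First I would apply Leibniz's rule in $a$ to separate the two factors, noting that all $b$-derivatives fall on $\Gamma_2^{-1}$:
\begin{align*}
\frac{\partial^{j+m}}{\partial a^j\partial b^m}\big(\Gamma_1\,\Gamma_2^{-1}\big) = \sum_{h=0}^{j}\binom{j}{h}\frac{\partial^{\,j-h}}{\partial a^{\,j-h}}\Gamma_1(|\xi|,a)\,\frac{\partial^{\,h+m}}{\partial a^{h}\partial b^{m}}(\Gamma_2(|\xi|,a,b))^{-1}.
\end{align*}
For the first factor I would invoke \eqref{estimate2.2.1} and \eqref{Re2.2.1} with $\alpha=1$, which provide both the bound $\big|\partial_a^{\,j-h}\Gamma_1\big|\lesssim |\xi|^{2(j-h)(\sigma_2-\sigma_1)}$ and its value at $a=0$.

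The core step is to control $\dfrac{\partial^{h+m}}{\partial a^h\partial b^m}(\Gamma_2)^{-1}$. For $h+m\ge 1$ I would apply the second formula of Lemma \ref{FadiBruno'sformula} with outer function $f(q)=q^{-1}$ and inner function $g=\Gamma_2(|\xi|,a,b)$, then insert \eqref{estimate2.2.2} and \eqref{Re2.2.2} for each inner derivative $\partial_a^{\gamma_{1,\rho}}\partial_b^{\gamma_{2,\rho}}\Gamma_2$. Because the index set $\mathcal{S}_{\ell,\theta}^{h,m}(\beta,\gamma)$ enforces $\sum_\rho \beta_\rho\gamma_{1,\rho}=h$ and $\sum_\rho \beta_\rho\gamma_{2,\rho}=m$, each admissible term carries exactly the homogeneity $|\xi|^{2h(\sigma_2-\sigma_1)+2m(\sigma-2\sigma_1)}$, whence
\begin{align*}
\left|\frac{\partial^{h+m}}{\partial a^h\partial b^m}(\Gamma_2)^{-1}\right|\lesssim |\xi|^{2h(\sigma_2-\sigma_1)+2m(\sigma-2\sigma_1)},
\end{align*}
together with the corresponding constant-coefficient identity at $(a,b)=(0,0)$; the trivial case $(h,m)=(0,0)$ is handled directly since $\Gamma_2^{-1}$ is bounded and equals $1$ there. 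Substituting these two ingredients into the Leibniz sum and summing over $h$ collapses all exponents to $-2\sigma_1+2j(\sigma_2-\sigma_1)+2m(\sigma-2\sigma_1)$, which yields both the stated upper bound and the identity at $(a,b)=(0,0)$.

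The main point to verify, and the only genuine departure from Lemma \ref{lemma2.3}, is that the outer function $f(q)=q^{-1}$ is singular at $q=0$, so the derivatives $f^{(\ell)}(\Gamma_2)=(-1)^\ell\ell!\,\Gamma_2^{-\ell-1}$ are uniformly bounded only if $\Gamma_2$ stays bounded away from zero on the frequency range. This is precisely where the restriction $|\xi|\le\varepsilon^*$ and the sign condition $\sigma-2\sigma_1>0$ (coming from $\sigma_1<\sigma/2$) enter: since $\Gamma_2=\big(1-4b|\xi|^{2(\sigma-2\sigma_1)}\Gamma_1^2\big)^{1/2}$ with $|\xi|^{2(\sigma-2\sigma_1)}$ small for $|\xi|\le\varepsilon^*$ and $b,\Gamma_1\in[0,1]$, one gets $\Gamma_2\ge\big(1-4(\varepsilon^*)^{2(\sigma-2\sigma_1)}\big)^{1/2}>0$ after shrinking $\varepsilon^*$ if necessary. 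With this lower bound the Faà di Bruno expansion is legitimate and all arising constants are finite, completing the argument.
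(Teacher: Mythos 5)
Your proof is correct and is essentially the argument the paper intends: the paper gives no separate proof of Lemma \ref{lemma2.4}, stating only that it follows ``by performing proof steps similar to Lemma \ref{lemma2.3}'', and your factorization $G^{-1}=|\xi|^{-2\sigma_1}\Gamma_1\Gamma_2^{-1}$ followed by Leibniz's rule and the second Fa\`a di Bruno formula (with the inputs \eqref{estimate2.2.1}--\eqref{Re2.2.2}) is exactly that adaptation. Your explicit verification that $\Gamma_2$ is bounded away from zero on $(0,\varepsilon^*]\times[0,1]\times[0,1]$, so that the outer function $q\mapsto q^{-1}$ causes no singularity, is a worthwhile detail that the paper leaves implicit.
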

From Lemmas \ref{lemma2.3} and \ref{lemma2.4}, we can prove the following proposition.
\begin{proposition}\label{pro2.1}
    Let $j, m \in \mathbb{N}$ and $0 \leq \sigma_1 < \sigma/2 < \sigma_2 \leq \sigma$. Then, we have the following estimates:
    \begin{align*}
        \left|\frac{\partial^{j+m}}{\partial a^j \partial b^m} \widehat{K_0^1}(t, |\xi|, a,b)\right| &\lesssim e^{-c|\xi|^{2\sigma_1}t} |\xi|^{2(\sigma-2\sigma_1)+ 2j(\sigma_2-\sigma_1)+ 2m(\sigma-2\sigma_1)},\\
        \left|\frac{\partial^{j+m}}{\partial a^j \partial b^m} \widehat{K_0^2}(t, |\xi|, a,b)\right| &\lesssim e^{-c|\xi|^{2(\sigma-\sigma_1)}t} |\xi|^{ 2j(\sigma_2-\sigma_1)+ 2m(\sigma-2\sigma_1)},\\
        \left|\frac{\partial^{j+m}}{\partial a^j \partial b^m} \widehat{K_1^1}(t, |\xi|, a,b)\right| &\lesssim e^{-c|\xi|^{2(\sigma-\sigma_1)}t} |\xi|^{-2\sigma_1+ 2j(\sigma_2-\sigma_1)+2m(\sigma-2\sigma_1)},\\
        \left|\frac{\partial^{j+m}}{\partial a^j \partial b^m} \widehat{K_1^2}(t, |\xi|, a,b)\right| &\lesssim e^{-c|\xi|^{2\sigma_1}t} |\xi|^{-2\sigma_1+ 2j(\sigma_2-\sigma_1)+2m(\sigma-2\sigma_1)},
    \end{align*}
    for all $(|\xi|,a,b) \in (0,\varepsilon^*] \times [0,1]\times [0,1]$ and $c$ is a suitable positive constant. Furthermore, there exist some constants $ C_{1,h,j,m}^*, C_{2, h,j,m}^*, C_{3,h,j,m}^*, C_{4,h,j,m}^* (0 \leq h \leq j+m)$ satisfying
    \begin{align*}
        &\frac{\partial^{j+m}\widehat{K_0^1}}{\partial a^j \partial b^m} (t, |\xi|, 0,0) \\
        &\qquad = e^{-|\xi|^{2\sigma_1}t} |\xi|^{2(\sigma-2\sigma_1)+ 2j(\sigma_2-\sigma_1)+ 2m(\sigma-2\sigma_1)} \sum_{h=0}^{j+m} C_{1,h,j,m}^{*}(|\xi|^{2\sigma_1} t)^h,\\
        &\frac{\partial^{j+m}\widehat{K_0^2}}{\partial a^j \partial b^m} (t, |\xi|, 0,0) \\
        &\qquad = e^{-|\xi|^{2(\sigma-\sigma_1)}t} |\xi|^{ 2j(\sigma_2-\sigma_1)+ 2m(\sigma-2\sigma_1)} \sum_{h=0}^{j+m} C_{2,h,j,m}^{*}(|\xi|^{2(\sigma-\sigma_1)}t)^h,\\
        &\frac{\partial^{j+m}\widehat{K_1^1}}{\partial a^j \partial b^m} (t, |\xi|, 0,0) \\
        &\qquad = e^{-|\xi|^{2(\sigma-\sigma_1)}t} |\xi|^{-2\sigma_1+ 2j(\sigma_2-\sigma_1)+2m(\sigma-2\sigma_1)} \sum_{h=0}^{j+m} C_{3,h,j,m}^{*}(|\xi|^{2(\sigma-\sigma_1)}t)^h,\\
        &\frac{\partial^{j+m}\widehat{K_1^2}}{\partial a^j \partial b^m} (t, |\xi|, 0,0) \\
        &\qquad = e^{-|\xi|^{2\sigma_1}t} |\xi|^{-2\sigma_1+ 2j(\sigma_2-\sigma_1)+2m(\sigma-2\sigma_1)} \sum_{h=0}^{j+m} C_{4,h,j,m}^{*}(|\xi|^{2\sigma_1}t)^h.
    \end{align*}
\end{proposition}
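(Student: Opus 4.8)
The plan is to write each kernel as a product of factors whose mixed $(a,b)$-derivatives have already been controlled in Lemmas \ref{lemma2.3} and \ref{lemma2.4}, and then to expand via the Leibniz product rule. Recalling the definitions from the Notation section,
$$\widehat{K_0^1} = G^{-1}\,\lambda_1^0\, e^{\lambda_2^0 t}, \quad \widehat{K_0^2} = G^{-1}\,\lambda_2^0\, e^{\lambda_1^0 t}, \quad \widehat{K_1^1} = G^{-1}\, e^{\lambda_1^0 t}, \quad \widehat{K_1^2} = G^{-1}\, e^{\lambda_2^0 t},$$
where each factor is viewed as a function of $(|\xi|,a,b)$. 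For such a product the operator $\partial_a^j \partial_b^m$ expands, through the two-variable Leibniz rule, into a finite sum over all ways of distributing the $j$ derivatives in $a$ and the $m$ derivatives in $b$ among the factors, weighted by binomial coefficients.

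First I would isolate the homogeneity principle behind Lemmas \ref{lemma2.3} and \ref{lemma2.4}: each application of $\partial_a$ raises the order in $|\xi|$ by $|\xi|^{2(\sigma_2-\sigma_1)}$ and each application of $\partial_b$ by $|\xi|^{2(\sigma-2\sigma_1)}$, \emph{uniformly} across the factors $G^{-1}$, $\lambda_1^0$, $\lambda_2^0$, $e^{\lambda_1^0 t}$, $e^{\lambda_2^0 t}$. Consequently, no matter how a given Leibniz term splits the $j+m$ derivatives, the total surplus power of $|\xi|$ is always $2j(\sigma_2-\sigma_1)+2m(\sigma-2\sigma_1)$, so this factor pulls out of the whole sum. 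Adding the base exponents of the individual factors then produces the stated leading power for each kernel: for $\widehat{K_0^1}$ it is $(-2\sigma_1)+2(\sigma-\sigma_1)+0=2(\sigma-2\sigma_1)$, for $\widehat{K_0^2}$ it is $(-2\sigma_1)+2\sigma_1+0=0$, and for $\widehat{K_1^1},\widehat{K_1^2}$ it is $-2\sigma_1$, where the base exponents of $G^{-1}$, $\lambda_1^0$, $\lambda_2^0$ are read off from Lemma \ref{lemma2.4}, \eqref{estimate2.3.1} and \eqref{estimate2.3.3}.

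Next I would treat the exponential factor, the only place where the variable $t$ enters each product. By the refined identities \eqref{Re2.3.2} and \eqref{Re2.3.4}, a derivative $\partial_a^{j'}\partial_b^{m'}$ of $e^{\lambda_1^0 t}$ or $e^{\lambda_2^0 t}$ at $(a,b)=(0,0)$ equals the corresponding exponential, times $|\xi|^{2j'(\sigma_2-\sigma_1)+2m'(\sigma-2\sigma_1)}$, times a polynomial in $|\xi|^{2(\sigma-\sigma_1)}t$ (resp.\ $|\xi|^{2\sigma_1}t$) whose degree is at most $j'+m'\le j+m$; in particular the undifferentiated case $(j',m')=(0,0)$ contributes the bare exponential, which is the eventual $h=0$ term. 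Substituting these together with the exact values of $\partial_a^{j'}\partial_b^{m'}G^{-1}$ and $\partial_a^{j'}\partial_b^{m'}\lambda_i^0$ at $(0,0)$ from Lemmas \ref{lemma2.3} and \ref{lemma2.4} into the Leibniz expansion, the common power $|\xi|^{2j(\sigma_2-\sigma_1)+2m(\sigma-2\sigma_1)}$, the base power, and the exponential all factor out, leaving a polynomial in the appropriate variable $|\xi|^{\cdots}t$ of degree at most $j+m$; collecting coefficients yields the sums $\sum_{h=0}^{j+m}C^*_{\cdot,h,j,m}(|\xi|^{\cdots}t)^h$. The uniform upper bounds follow from the same expansion with \eqref{estimate2.3.2} and \eqref{estimate2.3.4} in place of the exact formulas, absorbing every polynomial factor into a weaker exponential decay rate $e^{-c|\xi|^{\cdots}t}$.

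The main obstacle is the combinatorial bookkeeping: one must confirm that in the exact representation the power of $t$ never exceeds $j+m$, which holds because all $t$-powers originate from a single exponential factor and can only decrease when derivatives land on the $t$-independent factors $G^{-1}$, $\lambda_1^0$, $\lambda_2^0$, so the degree bound is inherited verbatim from \eqref{Re2.3.2}/\eqref{Re2.3.4}. It then remains to verify that the nested summations of the Leibniz rule recombine into a single polynomial of the claimed form; this is routine once the common $|\xi|$-power and exponential are extracted, since what is left on each term is a product of constants.
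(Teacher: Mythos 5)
Your proposal is correct and follows essentially the same route as the paper: both expand the kernels via the (two-variable) Leibniz rule and feed in the bounds and exact values from Lemmas \ref{lemma2.3} and \ref{lemma2.4}, the only cosmetic difference being that you apply a single three-factor Leibniz expansion where the paper iterates a two-factor one on $\lambda_i^0/G$. Your explicit ``homogeneity'' bookkeeping (each $\partial_a$ contributes $|\xi|^{2(\sigma_2-\sigma_1)}$, each $\partial_b$ contributes $|\xi|^{2(\sigma-2\sigma_1)}$, uniformly over factors) and the observation that the $t$-polynomial degree is inherited from the exponential factor alone are exactly the details the paper leaves implicit.
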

\begin{proof}
    By applying Leibniz's formula, we obtain the following relations:
    \begin{align*}
        &\frac{\partial^{j+m}}{\partial a^j \partial b^m} \widehat{K_0^1}(t,|\xi|,a,b) \\
        &\quad= \sum_{(0,0) \leq (j_1, m_1) \leq (j,m)} \frac{j!m!}{j_1! m_1 ! (j-j_1)! (m-m_1)!} \frac{\partial^{j_1+m_1}}{\partial a^{j_1}\partial b^{m_1}} \frac{\lambda_1^0(|\xi|,a,b)}{G(|\xi|, a,b)} \frac{\partial^{j-j_1+m-m_1}}{\partial a^{j-j_1} \partial b^{m-m_1}} e^{\lambda_2^0(|\xi|,a,b)t},\\
        &\frac{\partial^{j+m}}{\partial a^j \partial b^m} \widehat{K_0^2}(t,|\xi|,a,b) \\
        &\quad= \sum_{(0,0) \leq (j_1, m_1) \leq (j,m)} \frac{j!m!}{j_1! m_1 ! (j-j_1)! (m-m_1)!} \frac{\partial^{j_1+m_1}}{\partial a^{j_1}\partial b^{m_1}} \frac{\lambda_2^0(|\xi|,a,b)}{G(|\xi|, a,b)} \frac{\partial^{j-j_1+m-m_1}}{\partial a^{j-j_1} \partial b^{m-m_1}} e^{\lambda_1^0(|\xi|,a,b)t},\\
        &\frac{\partial^{j+m}}{\partial a^j \partial b^m} \widehat{K_1^1}(t,|\xi|,a,b) \\
        &\quad= \sum_{(0,0) \leq (j_1, m_1) \leq (j,m)} \frac{j!m!}{j_1! m_1 ! (j-j_1)! (m-m_1)!} \frac{\partial^{j_1+m_1}}{\partial a^{j_1}\partial b^{m_1}} (G(|\xi|,a,b))^{-1} \frac{\partial^{j-j_1+m-m_1}}{\partial a^{j-j_1} \partial b^{m-m_1}} e^{\lambda_1^0(|\xi|,a,b)t},\\
        &\frac{\partial^{j+m}}{\partial a^j \partial b^m} \widehat{K_1^2}(t,|\xi|,a,b) \\
        &\quad= \sum_{(0,0) \leq (j_1, m_1) \leq (j,m)} \frac{j!m!}{j_1! m_1 ! (j-j_1)! (m-m_1)!} \frac{\partial^{j_1+m_1}}{\partial a^{j_1}\partial b^{m_1}} (G(|\xi|,a,b))^{-1}\frac{\partial^{j-j_1+m-m_1}}{\partial a^{j-j_1} \partial b^{m-m_1}} e^{\lambda_2^0(|\xi|,a,b)t}.
    \end{align*}
    For the terms $\widehat{K_1^1}(t, |\xi|,a,b)$ and $\widehat{K_1^2}(t, |\xi|,a,b)$, the desired results can be concluded from Lemmas \ref{lemma2.3} and \ref{lemma2.4}. To treat the terms $\widehat{K_0^1}(t,|\xi|,a,b)$ and $\widehat{K_0^2}(t,|\xi|,a,b)$, we need to apply Leibniz's formula once again for 
    \begin{align*}
        \frac{\partial^{j_1+m_1}}{\partial a^{j_1}\partial b^{m_1}} \frac{\lambda_1^0(|\xi|,a,b)}{G(|\xi|, a,b)} &\text{ and } \frac{\partial^{j_1+m_1}}{\partial a^{j_1}\partial b^{m_1}} \frac{\lambda_2^0(|\xi|,a,b)}{G(|\xi|, a,b)}.
    \end{align*}
    Then, combining them with Lemmas \ref{lemma2.3} and \ref{lemma2.4} we may conclude Proposition \ref{pro2.1}.
\end{proof}

%....................................................................
\subsection{Estimates for high frequencies}
We are now going to proceed with the estimates for high frequencies in the following lemmas.
\begin{lemma}\label{lemma2.5}
    Let $n \geq 1$ and $0 \leq \sigma_1 < \sigma/2 < \sigma_2 \leq \sigma$. Then, the following estimate holds for all $s \geq 0$ and $t \geq 1$:
    \begin{align*}
        \| |\xi|^s \widehat{u}(t,\xi)\chi_{\rm H}(|\xi|)\|_{L^2} \lesssim e^{-ct} (\|u_0\|_{H^s}+ \|u_1\|_{H^{[s-2\sigma_2]^+}}),
    \end{align*}
    where $c$ is a suitable positive constant.
\end{lemma}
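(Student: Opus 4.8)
The plan is to work directly from the solution representation \eqref{represen1}--\eqref{repre2} in the Fourier space and to reduce the claim to pointwise bounds on the multipliers $\widehat{K_0}$ and $\widehat{K_1}$ on the support of $\chi_{\rm H}$, i.e.\ on $\{|\xi|\ge \varepsilon^*/2\}$. By Plancherel's theorem it suffices to establish, for a suitable $c>0$ and all $t\ge 1$,
\begin{align*}
|\widehat{K_0}(t,\xi)|\lesssim e^{-ct}\quad\text{and}\quad |\widehat{K_1}(t,\xi)|\lesssim |\xi|^{-2\sigma_2}e^{-ct}
\end{align*}
for $|\xi|\ge \varepsilon^*/2$. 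I would split this frequency region into the genuinely large part $\{|\xi|\ge 1/\varepsilon^*\}$, where the roots are real and obey the asymptotics \eqref{Rela1}, and the compact intermediate shell $\{\varepsilon^*/2\le|\xi|\le 1/\varepsilon^*\}$, on which the roots may be complex.

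The first and decisive step is a uniform spectral gap: $\operatorname{Re}\lambda_1(|\xi|),\ \operatorname{Re}\lambda_2(|\xi|)\le -c<0$ for all $|\xi|\ge \varepsilon^*/2$. On $\{|\xi|\ge 1/\varepsilon^*\}$ this follows from \eqref{Rela1}, since $\lambda_2\sim -|\xi|^{2\sigma_2}\to-\infty$ while $\lambda_1\sim -|\xi|^{2(\sigma-\sigma_2)}\le -c(1/\varepsilon^*)^{2(\sigma-\sigma_2)}$ (the case $\sigma_2=\sigma$ giving $\lambda_1\sim-c$ directly). On the compact shell, where the roots are complex, one has $\operatorname{Re}\lambda_{1,2}=-\tfrac12(|\xi|^{2\sigma_1}+|\xi|^{2\sigma_2})$, which is continuous and bounded above by $-\tfrac12(\varepsilon^*/2)^{2\sigma_1}<0$; elsewhere on the shell the roots are real and negative, so continuity and compactness again yield a uniform negative upper bound. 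Consequently $|e^{\lambda_j(|\xi|)t}|\le e^{-ct}$ throughout $\{|\xi|\ge\varepsilon^*/2\}$.

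The main obstacle is that the denominator $\lambda_1-\lambda_2$ vanishes at the points of the shell where the discriminant changes sign, so the quotients defining $\widehat{K_0},\widehat{K_1}$ are not directly amenable to a naive estimate there. I would circumvent this by the divided-difference representation
\begin{align*}
\widehat{K_1}(t,\xi)=t\int_0^1 e^{(\theta\lambda_1+(1-\theta)\lambda_2)t}\,\rmd\theta,\qquad \widehat{K_0}(t,\xi)=e^{\lambda_1 t}+e^{\lambda_2 t}-\partial_t\widehat{K_1}(t,\xi),
\end{align*}
which are manifestly free of singularities. Since the exponent $\theta\lambda_1+(1-\theta)\lambda_2$ has real part $\le -c$ for every $\theta\in[0,1]$ (a convex combination of numbers with real part $\le -c$), these give $|\widehat{K_1}|\lesssim t\,e^{-ct}$ and $|\widehat{K_0}|\lesssim (1+t)e^{-ct}$ on the whole shell, and after relabelling $c$ (absorbing the polynomial factor via $t^m e^{-ct}\lesssim e^{-c't}$) the desired $e^{-ct}$ bounds follow. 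On $\{|\xi|\ge1/\varepsilon^*\}$ the sharper factor $|\xi|^{-2\sigma_2}$ in the bound for $\widehat{K_1}$ is read off directly from $|\lambda_1-\lambda_2|\sim|\xi|^{2\sigma_2}$ together with $|e^{\lambda_j t}|\le e^{-ct}$, while for $\widehat{K_0}$ the prefactors $\lambda_1/(\lambda_1-\lambda_2)$ and $\lambda_2/(\lambda_1-\lambda_2)$ are bounded by \eqref{Rela1}; on the compact shell $|\xi|^{-2\sigma_2}$ is comparable to a constant, so the two zones combine into the claimed global estimates.

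Finally I would assemble the result. On $\operatorname{supp}\chi_{\rm H}$ one has $|\xi|^s\lesssim\langle\xi\rangle^s$ and, distinguishing $s\ge2\sigma_2$ from $s<2\sigma_2$, $|\xi|^{s-2\sigma_2}\lesssim\langle\xi\rangle^{[s-2\sigma_2]^+}$. Hence, using the pointwise multiplier bounds and Plancherel,
\begin{align*}
\big\||\xi|^s\widehat{K_0}\,\widehat{u_0}\,\chi_{\rm H}\big\|_{L^2}\lesssim e^{-ct}\|u_0\|_{H^s},\qquad \big\||\xi|^s\widehat{K_1}\,\widehat{u_1}\,\chi_{\rm H}\big\|_{L^2}\lesssim e^{-ct}\|u_1\|_{H^{[s-2\sigma_2]^+}},
\end{align*}
and adding the two contributions yields the assertion of the lemma for all $t\ge1$.
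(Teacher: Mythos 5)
Your proposal is correct and follows the same basic strategy as the paper: reduce the claim via Plancherel to $L^\infty$ bounds for $\widehat{K_0}\chi_{\rm H}$ and $|\xi|^{2\sigma_2}\widehat{K_1}\chi_{\rm H}$, and obtain exponential decay from the uniformly negative real parts of the characteristic roots on $\{|\xi|\ge\varepsilon^*/2\}$. The one genuine difference is that you treat the compact intermediate shell $\varepsilon^*/2\le|\xi|\le1/\varepsilon^*$ explicitly: the paper's proof only invokes the asymptotics \eqref{Rela1}, which are stated for $|\xi|\ge1/\varepsilon^*$, and says nothing about the points of the shell where the discriminant vanishes and $\lambda_1-\lambda_2=0$ makes the quotients in \eqref{repre1}--\eqref{repre2} formally singular. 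Your divided-difference representation $\widehat{K_1}=t\int_0^1 e^{(\theta\lambda_1+(1-\theta)\lambda_2)t}\,\rmd\theta$ together with $\widehat{K_0}=e^{\lambda_1 t}+e^{\lambda_2 t}-\partial_t\widehat{K_1}$ removes that apparent singularity cleanly (the convexity argument for the real part of the exponent is exactly right, and the polynomial-in-$t$ factors are harmlessly absorbed into the exponential), so your write-up is in fact more complete than the paper's on this point; the price is only a slightly longer argument on a region where compactness and continuity of $\max\{\operatorname{Re}\lambda_1,\operatorname{Re}\lambda_2\}$ would already give the bound for the regularized expressions. The final bookkeeping with $|\xi|^{s-2\sigma_2}\lesssim\langle\xi\rangle^{[s-2\sigma_2]^+}$ on $\operatorname{supp}\chi_{\rm H}$ matches the paper's use of the $H^{[s-2\sigma_2]^+}$ norm.
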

\begin{proof}
    Thanks to the representation formula (\ref{represen1}) of solutions to (\ref{Main.Eq.1}) and Parseval's formula, we obtain
    \begin{align*}
        \||\xi|^s  \widehat{u}(t,\xi)\chi_{\rm H}(|\xi|)\|_{L^2} \leq \left\|  \widehat{K_0}(t,\xi) \chi_{\rm H}(|\xi|)\right\|_{L^{\infty}} \|u_0\|_{H^s} + \left\||\xi|^{2\sigma_2} \widehat{K_1}(t,\xi) \chi_{\rm H}(|\xi|)\right\|_{L^\infty} \|u_1\|_{H^{[s-2\sigma_2]^+}}.
    \end{align*}
    From the relation (\ref{Rela1}) and the fact that
  \begin{align*}
      \left\|\widehat{K_0}(t,\xi) \chi_{\rm H}(|\xi|)\right\|_{L^\infty} &\lesssim \left(e^{-c_1|\xi|^{2(\sigma-\sigma_2)}t} +  |\xi|^{2(\sigma-2\sigma_2)}e^{-c_1|\xi|^{2\sigma_2}t}\right) \chi_{\rm H}(|\xi|) \lesssim e^{-ct},\\
      \left\||\xi|^{2\sigma_2}\widehat{K_1}(t,\xi) \chi_{\rm H}(|\xi|)\right\|_{L^\infty} &\lesssim \left(e^{-c_1 |\xi|^{2(\sigma-\sigma_2)}t} + e^{-c_1 |\xi|^{2\sigma_2}t}\right)\chi_{\rm H}(|\xi|) \lesssim e^{-ct},
  \end{align*}
where $c$ and $c_1$ are suitable positive constants, we may conclude Lemma \ref{lemma2.5}.
\end{proof}
\begin{lemma}\label{lemma2.6}
    Let $n \geq 1$, $k \in \mathbb{N}$ and $0 \leq \sigma_1 < \sigma/2 < \sigma_2 \leq \sigma$. Then, the following estimates hold for all $s \geq 0$ and $t \geq 1$:
    \begin{align*}
        \left\||\xi|^s \mathcal{A}_0^k(t,\xi) \chi_{\rm H}(|\xi|)\right\|_{L^\infty} \lesssim e^{-ct} , \,\, \left\||\xi|^s \mathcal{A}_1^k(t,\xi)\chi_{\rm H}(|\xi|)\right\|_{L^\infty} \lesssim e^{-ct}, 
    \end{align*}
    and
    \begin{align*}
        \left\||\xi|^s \mathcal{B}_0^k(t,\xi)\chi_{\rm H}(|\xi|)\right\|_{L^\infty} \lesssim e^{-ct} , \,\, \left\||\xi|^s \mathcal{B}_1^k(t,\xi)\chi_{\rm H}(|\xi|)\right\|_{L^\infty} \lesssim e^{-ct}, 
    \end{align*}
    where $c$ is suitable positive constant.
\end{lemma}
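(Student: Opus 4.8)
The plan is to prove Lemma \ref{lemma2.6} by reducing the $L^\infty$ estimates on the high-frequency zone to the large-frequency asymptotics \eqref{Rela1} of the characteristic roots, exactly as was done for $\widehat{K_0}$ and $\widehat{K_1}$ in Lemma \ref{lemma2.5}. The quantities $\mathcal{A}_0^k, \mathcal{A}_1^k, \mathcal{B}_0^k, \mathcal{B}_1^k$ are all finite sums (over $0 \leq j+m \leq k-1$) of derivatives of the form $\frac{\partial^{j+m}}{\partial a^j \partial b^m}\widehat{K_i^r}(t,|\xi|,0,0)$, so it suffices to bound each such derivative on $\mathrm{supp}\,\chi_{\rm H} \subset \{|\xi| \geq \varepsilon^*/2\}$ and then sum the finitely many contributions.

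First I would observe that on the high-frequency region the evaluation point is $(a,b)=(0,0)$, which is precisely the degenerate endpoint; however, since we only need an upper bound in $L^\infty$, I can instead differentiate the explicit expressions for $\widehat{K_i^r}(t,|\xi|,a,b)$ and use the large-$|\xi|$ behavior \eqref{Rela1}, namely $\lambda_1 \sim -|\xi|^{2(\sigma-\sigma_2)}$, $\lambda_2 \sim -|\xi|^{2\sigma_2}$ and $\lambda_1 - \lambda_2 \sim |\xi|^{2\sigma_2}$ for $|\xi| \geq 1/\varepsilon^*$. The key structural point is that every term carries an exponential factor $e^{\lambda_1 t}$ or $e^{\lambda_2 t}$ whose argument is negative and bounded away from zero on $\mathrm{supp}\,\chi_{\rm H}$: for $\varepsilon^*/2 \leq |\xi|$ the real parts of $\lambda_1,\lambda_2$ stay negative, so each $a,b$-derivative (which produces at most polynomial factors in $t$ and powers of $|\xi|$ via Fa\`a di Bruno, Lemma \ref{FadiBruno'sformula}) is dominated by $e^{-c_1|\xi|^{2(\sigma-\sigma_2)}t}$ or $e^{-c_1|\xi|^{2\sigma_2}t}$ times an algebraic weight in $|\xi|$. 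On the cut-off support $|\xi|$ is bounded below, so for $t \geq 1$ these decay like $e^{-ct}$, and the extra algebraic weight $|\xi|^s$ is harmless because the exponential decay is uniform in $|\xi|$ on the transition annulus $\varepsilon^*/2 \leq |\xi| \leq 1/\varepsilon^*$ and beats any polynomial growth for $|\xi| \geq 1/\varepsilon^*$.

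The steps, in order, are: (i) write out $\mathcal{A}_0^k$ etc.\ as finite sums and reduce to a uniform bound on each $\partial_a^j\partial_b^m \widehat{K_i^r}$ restricted to $\{|\xi|\geq \varepsilon^*/2\}$; (ii) apply Leibniz and Fa\`a di Bruno (as in the proof of Proposition \ref{pro2.1}) to express each derivative as a sum of terms of the shape (algebraic in $|\xi|$)$\times$(polynomial in $t$)$\times e^{\lambda_i t}$; (iii) invoke \eqref{Rela1} to get the uniform exponential decay $e^{-c_1 \min\{|\xi|^{2(\sigma-\sigma_2)},\,|\xi|^{2\sigma_2}\}t}$; (iv) absorb the polynomial-in-$t$ prefactors and the weight $|\xi|^s$ into a slightly smaller exponential rate $e^{-ct}$, using $t\geq 1$ and the lower bound $|\xi|\geq \varepsilon^*/2$. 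The $\mathcal{B}$-quantities are handled identically since they are subsets of the same collection of derivative terms.

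I expect the main obstacle to be the bookkeeping at the transition zone $\varepsilon^*/2 \leq |\xi| \leq 1/\varepsilon^*$, where \eqref{Rela1} does not directly apply and one must instead use continuity and the fact that $\mathrm{Re}\,\lambda_{1,2}(|\xi|) < 0$ is bounded away from zero on this compact annulus to guarantee a uniform exponential decay rate there; combined with the genuine large-frequency decay from \eqref{Rela1}, this yields a single rate $c>0$ valid on all of $\mathrm{supp}\,\chi_{\rm H}$. Once the uniform negativity of the real parts is established on the whole support of $\chi_{\rm H}$, the remaining estimates are routine majorizations of the type already used in Lemma \ref{lemma2.5}.
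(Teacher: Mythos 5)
Your overall reduction is the right one and is the same as the paper's: write $\mathcal{A}_0^k,\mathcal{A}_1^k,\mathcal{B}_0^k,\mathcal{B}_1^k$ as finite sums of the derivatives $\frac{\partial^{j+m}}{\partial a^j\partial b^m}\widehat{K_i^r}(t,|\xi|,0,0)$, bound each term in $L^\infty$ on $\{|\xi|\ge \varepsilon^*/2\}$, and use that an exponential factor whose exponent is bounded away from $0$ absorbs all algebraic weights for $t\ge 1$. However, steps (ii)--(iv) analyze the wrong objects. The quantities entering $\mathcal{A}_i^k$ and $\mathcal{B}_i^k$ are derivatives evaluated at $(a,b)=(0,0)$, where $\lambda_1^0(|\xi|,0,0)=-|\xi|^{2(\sigma-\sigma_1)}$, $\lambda_2^0(|\xi|,0,0)=-|\xi|^{2\sigma_1}$ and $G(|\xi|,0,0)=|\xi|^{2\sigma_1}$ \emph{exactly}, for every $|\xi|>0$. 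The relation \eqref{Rela1} that you invoke describes the roots $\lambda_{1,2}(|\xi|)=\lambda_{1,2}^0(|\xi|,1,1)$ at large frequency; it is the correct tool for Lemma \ref{lemma2.5} (which concerns $\widehat{u}$ itself) but is irrelevant here, and the rates you announce in step (iii), $e^{-c_1\min\{|\xi|^{2(\sigma-\sigma_2)},\,|\xi|^{2\sigma_2}\}t}$, are not the rates that actually occur in $\mathcal{A}_i^k$, $\mathcal{B}_i^k$. Likewise, your anticipated ``main obstacle'' at the transition annulus --- checking by continuity that $\mathrm{Re}\,\lambda_{1,2}<0$ there --- is moot for this lemma: the $(0,0)$-evaluated expressions are explicit, real and non-oscillatory on all of $\mathrm{supp}\,\chi_{\rm H}$, so neither the set $\Omega$ nor any zone analysis enters.

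The repair is what the paper actually does. Proposition \ref{pro2.1} provides the closed forms, e.g.
\[
\frac{\partial^{j+m}\widehat{K_1^1}}{\partial a^j\partial b^m}(t,|\xi|,0,0)=e^{-|\xi|^{2(\sigma-\sigma_1)}t}\,|\xi|^{-2\sigma_1+2j(\sigma_2-\sigma_1)+2m(\sigma-2\sigma_1)}\sum_{h=0}^{j+m}C^*_{3,h,j,m}\bigl(|\xi|^{2(\sigma-\sigma_1)}t\bigr)^h,
\]
and analogously for the other three kernels, with $e^{-|\xi|^{2\sigma_1}t}$ replacing $e^{-|\xi|^{2(\sigma-\sigma_1)}t}$ for $\widehat{K_0^1}$ and $\widehat{K_1^2}$; these identities, though stated in Proposition \ref{pro2.1} for $|\xi|\in(0,\varepsilon^*]$, hold for all $|\xi|>0$ because they are exact derivatives of explicit functions. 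One then writes, with $\beta\in\{2\sigma_1,\,2(\sigma-\sigma_1)\}$ and $\alpha$ the total power of $|\xi|$ including the weight $|\xi|^s$,
\[
|\xi|^{\alpha}\bigl(|\xi|^{\beta}t\bigr)^{h}e^{-|\xi|^{\beta}t}=t^{-\alpha/\beta}\bigl(|\xi|^{\beta}t\bigr)^{h+\alpha/\beta}e^{-|\xi|^{\beta}t}\lesssim t^{-\alpha/\beta}\,e^{-\frac12|\xi|^{\beta}t}\lesssim e^{-ct}
\]
uniformly on $|\xi|\ge\varepsilon^*/2$, $t\ge 1$. With this substitution your argument closes and is the paper's proof; as written, it would not produce the exponential factors that actually appear.
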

\begin{proof}
    From the definitions of $\mathcal{A}_0^k(t,\xi)$ and $\mathcal{A}_1^k(t,\xi)$, it follows that
    \begin{align*}
        &\left\||\xi|^s \mathcal{A}_0^k(t,\xi) \chi_{\rm H}(|\xi|)\right\|_{L^\infty} +  \left\||\xi|^s \mathcal{B}_0^k(t,\xi)\chi_{\rm H}(|\xi|)\right\|_{L^\infty} \\
        &\hspace{2cm} \leq  \sum_{0 \leq j+m \leq k-1}\frac{1}{j!m!} \left\||\xi|^s\chi_{\rm H}(|\xi|)\frac{\partial^{j+m} \widehat{K_0^1}}{\partial a^j \partial b^m}(t,|\xi|, 0,0)\right\|_{L^\infty}\\
        &\hspace{5cm}+ 2 \sum_{0 \leq j+m \leq k-1} \frac{1}{j!m!}\left\||\xi|^s\chi_{\rm H}(|\xi|)\frac{\partial^{j+m} \widehat{K_0^2}}{\partial a^j \partial b^m}(t,|\xi|, 0,0)\right\|_{L^\infty},\\
        &\left\||\xi|^s \mathcal{A}_1^k(t,\xi)\chi_{\rm H}(|\xi|)\right\|_{L^\infty} +  \left\||\xi|^s \mathcal{B}_1^k(t,\xi)\chi_{\rm H}(|\xi|)\right\|_{L^\infty}\\
        &\hspace{2cm}\leq  2 \sum_{0 \leq j+m \leq k-1} \frac{1}{j!m!}\left\||\xi|^s\chi_{\rm H}(|\xi|)\frac{\partial^{j+m} \widehat{K_1^1}}{\partial a^j \partial b^m}(t,|\xi|, 0,0)\right\|_{L^\infty} \\
        &\hspace{5cm}+ \sum_{0 \leq j+m \leq k-1} \frac{1}{j!m!}\left\||\xi|^s\chi_{\rm H}(|\xi|)\frac{\partial^{j+m} \widehat{K_1^2}}{\partial a^j \partial b^m}(t,|\xi|, 0,0)\right\|_{L^\infty}.
    \end{align*}
    By applying Proposition \ref{pro2.1}, we arrive at
    \begin{align*}
        &\left\||\xi|^s\frac{\partial^{j+m} \widehat{K_1^1}}{\partial a^j \partial b^m}(t,|\xi|, 0,0)\right\|_{L^\infty(|\xi| \geq \varepsilon^*/2)}\\
        &\qquad= \left\|e^{-|\xi|^{2(\sigma-\sigma_1)}t} |\xi|^{s-2\sigma_1+ 2j(\sigma_2-\sigma_1)+2m(\sigma-2\sigma_1)} \sum_{h=1}^{j+m} C_{3,h,j,m}^{*}(|\xi|^{2(\sigma-\sigma_1)}t)^h\right\|_{L^\infty(|\xi| \geq \varepsilon^*/2)}\\
        &\qquad= t ^{-\alpha_{s,j,m}} \left\|e^{-|\xi|^{2(\sigma-\sigma_1)}t} (|\xi|^{2(\sigma-\sigma_1)}t)^{\alpha_{s,j,m}} \sum_{h=1}^{j+m} C_{3,h,j,m}^{*}(|\xi|^{2(\sigma-\sigma_1)}t)^h\right\|_{L^\infty(|\xi| \geq \varepsilon^*/2)}\\
        &\qquad\lesssim t^{-\alpha_{s,j,m}} \left\|e^{-\frac{1}{2}|\xi|^{2(\sigma-\sigma_1)}t}\right\|_{L^\infty(|\xi| \geq \varepsilon^*/2)} \lesssim e^{-ct}
    \end{align*}
  for all $j, m \in \mathbb{N}$ and $t 
 \geq 1$, where $$\alpha_{s,j,m} := \frac{s-2\sigma_1+2j(\sigma_2-\sigma_1)+2m(\sigma-2\sigma_1)}{2(\sigma-\sigma_1)}.$$
   Therefore, performing some steps similar to the estimates for the terms
  $\widehat{K_0^1}$, $\widehat{K_0^2}$ and $\widehat{K_1^2}$ we may conclude Lemma \ref{lemma2.6}.
\end{proof}

\subsection{Proof of main results}
Before proving Theorems \ref{Linear_Asym} and \ref{Linear_Asym_1}, we need to recall the following important auxiliary ingredients.
\begin{lemma}\label{lemma2.7}
 Let $n \geq 1, c > 0$, $\beta > 0$ and $\alpha > -n/2$. Then, it holds
 \begin{align*}
     \left\||\xi|^{\alpha} e^{-c|\xi|^{\beta}t} \chi_{\rm L}(|\xi|)\right\|_{L^2} \lesssim (1+t)^{-\frac{n}{2\beta}-\frac{\alpha}{\beta}} \text{ for all } t >0.
 \end{align*}
 \end{lemma}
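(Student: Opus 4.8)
The plan is to reduce the $L^2$-norm over $\R^n$ to a one-dimensional radial integral via polar coordinates, and then to extract the $t$-dependence by a scaling substitution, treating large and small times separately.

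First I would pass to polar coordinates. Since the integrand is radial, writing $\omega_{n-1}$ for the surface measure of the unit sphere $\mathbb{S}^{n-1}$ and using $0 \le \chi_{\rm L} \le 1$, one gets
$$\left\||\xi|^{\alpha} e^{-c|\xi|^{\beta}t} \chi_{\rm L}(|\xi|)\right\|_{L^2}^2 \le \omega_{n-1} \int_0^\infty r^{2\alpha + n -1} e^{-2c r^\beta t}\,\rmd r.$$
The hypothesis $\alpha > -n/2$ is precisely what guarantees $2\alpha + n - 1 > -1$, so the integrand is integrable near $r = 0$, while the exponential factor (using $\beta > 0$) secures convergence at infinity.

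Then I would substitute $r = s\, t^{-1/\beta}$, which turns the integral into $t^{-(n+2\alpha)/\beta}$ multiplied by the finite constant $\int_0^\infty s^{2\alpha+n-1} e^{-2cs^\beta}\,\rmd s$ (a Gamma-type value, finite for the same reason). Taking square roots yields
$$\left\||\xi|^{\alpha} e^{-c|\xi|^{\beta}t} \chi_{\rm L}(|\xi|)\right\|_{L^2} \lesssim t^{-\frac{n}{2\beta} - \frac{\alpha}{\beta}},$$
valid for every $t > 0$. For $t \ge 1$ this is already comparable to $(1+t)^{-\frac{n}{2\beta}-\frac{\alpha}{\beta}}$, since $t \le 1+t \le 2t$ there.

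Finally, to obtain the stated $(1+t)$-form near $t=0$, I would handle $0 < t \le 1$ separately: there I bound $e^{-c|\xi|^\beta t} \le 1$ and exploit that $\chi_{\rm L}$ is supported in $\{|\xi| \le \varepsilon^*\}$, so the norm is dominated by the finite, $t$-independent quantity $\big(\omega_{n-1}\int_0^{\varepsilon^*} r^{2\alpha+n-1}\,\rmd r\big)^{1/2}$, again finite thanks to $\alpha > -n/2$. Because the exponent $\frac{n}{2\beta}+\frac{\alpha}{\beta}$ is positive, the factor $(1+t)^{-\frac{n}{2\beta}-\frac{\alpha}{\beta}}$ is bounded below by a positive constant on $[0,1]$, so this uniform bound is absorbed into the desired right-hand side. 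Combining the two regimes gives the claim. The one point requiring care — and the reason the cut-off $\chi_{\rm L}$ genuinely matters — is the small-time regime: the scaling bound alone diverges as $t \to 0^+$, and it is the compact support of $\chi_{\rm L}$ together with $\alpha > -n/2$ that supplies the uniform bound there.
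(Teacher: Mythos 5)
Your proof is correct and follows essentially the same route as the paper: a scaling substitution reducing the integral to a convergent Gamma-type constant, with $\alpha>-n/2$ ensuring integrability near the origin. The only cosmetic difference is that the paper uses the compact support of $\chi_{\rm L}$ to replace $e^{-2c|\xi|^{\beta}t}$ by $Ce^{-2c|\xi|^{\beta}(1+t)}$ and then scales by $(1+t)^{1/\beta}$ in one step, whereas you scale by $t^{1/\beta}$ and treat the regime $0<t\le 1$ separately; both devices rest on the same observation about the cut-off.
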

 \begin{proof}
     By changing the variable $\eta = \xi (1+t)^{\frac{1}{\beta}}$, we obtain
     \begin{align*}
         \left\||\xi|^{\alpha} e^{-c|\xi|^{\beta}t} \chi_{\rm L}(|\xi|)\right\|_{L^2}^2 &= \int_{|\xi| \leq \varepsilon^*} |\xi|^{2\alpha} e^{-2c|\xi|^{\beta}t} \chi_{\rm L}^2(|\xi|) d\xi \\
         &\leq C \int_{|\xi| \leq \varepsilon^*} |\xi|^{2\alpha} e^{-2c|\xi|^{\beta}(t+1)} \chi_{\rm L}^2(|\xi|) d\xi\\
         &\leq C(1+t)^{-\frac{n}{\beta}-\frac{2\alpha}{\beta}} \int_{\mathbb{R}^n} |\eta|^{2\alpha} e^{-2c|\eta|^{\beta}} d\eta.
     \end{align*}
     Note that the final integral converges because of $\alpha > -n/2$. This finishes the proof of Lemma \ref{lemma2.7}.
 \end{proof}

 \begin{lemma}[see \cite{IkehataTakeda2019}] \label{L^1.Lemma}
Let $s\ge 0$. Let us assume $h= h(x) \in L^1$ and $\phi=\phi(t,x)$ be a smooth function satisfying
$$ \big\||D|^s \phi(t,\cdot)\big\|_{L^2} \lesssim t^{-\alpha} \quad \text{ and }\quad  \big\||D|^{s+1} \phi(t,\cdot)\big\|_{L^2} \lesssim t^{-\alpha-\beta}, $$
for some positive constants $\alpha,\,\beta>0$. Then, it holds:
$$ \left\||D|^s \left(\phi(t,x) \ast_x h(x)- \left(\int_{\R^n}h(y)\,dy\right)\phi(t,x)\right)(t,\cdot) \right\|_{L^2}= o\big(t^{-\alpha}\big) \quad \text{ as }t \to \ity, $$
for all space dimensions $n\ge 1$.
\end{lemma}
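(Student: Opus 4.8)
The plan is to pass to the Fourier side and extract the decay from the continuity of $\widehat{h}$ at the origin. By Plancherel's identity together with the convolution theorem, and since $\mathfrak{F}\big(\phi\ast_x h\big)(t,\xi)=\widehat{\phi}(t,\xi)\widehat{h}(\xi)$ while $\widehat{h}(0)=\intRn h(y)\,dy$, the squared quantity to be estimated equals
\begin{align*}
\Big\||D|^s\big(\phi\ast_x h-\widehat{h}(0)\,\phi\big)\Big\|_{L^2}^2=\intRn |\xi|^{2s}\,|\widehat{\phi}(t,\xi)|^2\,\big|\widehat{h}(\xi)-\widehat{h}(0)\big|^2\,d\xi.
\end{align*}
Because $h\in L^1$, its Fourier transform $\widehat{h}$ is bounded and uniformly continuous, so $|\widehat{h}(\xi)-\widehat{h}(0)|\le 2\|h\|_{L^1}$ for all $\xi$ and $|\widehat{h}(\xi)-\widehat{h}(0)|\to 0$ as $\xi\to 0$. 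I would record the associated modulus of continuity $\omega(r):=\sup_{|\xi|\le r}|\widehat{h}(\xi)-\widehat{h}(0)|$, which satisfies $\omega(r)\to 0$ as $r\to 0^+$; this vanishing is the sole source of the improvement from $O$ to $o$.

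Next I would split the frequency integral at a threshold $R(t)=t^{-\gamma}$, where $\gamma$ is a fixed constant chosen with $0<\gamma<\beta$. On the low-frequency region $|\xi|\le R(t)$ I would bound $|\widehat{h}(\xi)-\widehat{h}(0)|\le \omega(R(t))$ and invoke the first hypothesis to obtain
\begin{align*}
\int_{|\xi|\le R(t)}|\xi|^{2s}|\widehat{\phi}|^2\,\big|\widehat{h}(\xi)-\widehat{h}(0)\big|^2\,d\xi\le \omega(R(t))^2\,\big\||D|^s\phi(t,\cdot)\big\|_{L^2}^2\lesssim \omega(R(t))^2\,t^{-2\alpha}.
\end{align*}
On the high-frequency region $|\xi|>R(t)$ I would instead use the uniform bound $|\widehat{h}(\xi)-\widehat{h}(0)|\le 2\|h\|_{L^1}$ together with the elementary inequality $|\xi|^{2s}\le R(t)^{-2}|\xi|^{2s+2}$ and the second hypothesis to get
\begin{align*}
\int_{|\xi|>R(t)}|\xi|^{2s}|\widehat{\phi}|^2\,\big|\widehat{h}(\xi)-\widehat{h}(0)\big|^2\,d\xi\le \frac{4\|h\|_{L^1}^2}{R(t)^2}\,\big\||D|^{s+1}\phi(t,\cdot)\big\|_{L^2}^2\lesssim R(t)^{-2}\,t^{-2\alpha-2\beta}.
\end{align*}

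Combining the two contributions yields $\big\||D|^s(\phi\ast_x h-\widehat{h}(0)\phi)\big\|_{L^2}^2\lesssim t^{-2\alpha}\big(\omega(t^{-\gamma})^2+t^{2\gamma-2\beta}\big)$. Since $t^{-\gamma}\to 0$ forces $\omega(t^{-\gamma})\to 0$, while $2\gamma-2\beta<0$ forces $t^{2\gamma-2\beta}\to 0$, the bracketed factor tends to $0$ as $t\to\ity$; taking square roots then delivers precisely $o(t^{-\alpha})$. The genuinely delicate point is securing little-$o$ rather than merely $O(t^{-\alpha})$, and this is extracted entirely from the vanishing of $\omega(R(t))$ on the low-frequency part, which relies only on $h\in L^1$ (hence $\widehat{h}$ continuous at $\xi=0$). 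The accompanying subtlety is the scale-balancing choice $0<\gamma<\beta$: it is exactly what makes the low- and high-frequency remainders both decay faster than $t^{-2\alpha}$ at the same time, so that their sum is negligible against $t^{-2\alpha}$.
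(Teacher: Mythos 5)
The paper does not prove this lemma itself --- it imports it from \cite{IkehataTakeda2019} --- so there is no internal proof to compare against; judged on its own, your argument is correct and is essentially the standard proof of that result: Plancherel plus the convolution theorem reduce everything to $\int|\xi|^{2s}|\widehat{\phi}|^2|\widehat{h}(\xi)-\widehat{h}(0)|^2\,d\xi$, the low frequencies are controlled by the continuity of $\widehat{h}$ at the origin (Riemann--Lebesgue side of $h\in L^1$), and the high frequencies by borrowing one power of $|\xi|$ to invoke the $\||D|^{s+1}\phi\|_{L^2}$ hypothesis. Your only departure from the usual presentation is the time-dependent splitting radius $R(t)=t^{-\gamma}$ with $0<\gamma<\beta$ in place of the fixed-$\delta$, $\varepsilon$-argument; both are valid, and your version has the small advantage of producing an explicit modulus $\omega(t^{-\gamma})^2+t^{2\gamma-2\beta}$ quantifying the $o(1)$ factor.
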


\begin{proof}[\textbf{Proof of Theorem \ref{Linear_Asym}}] First of all, we show the estimate (\ref{UpperEs2.1}). Indeed, using Parseval's formula and Young's convolution inequality one gets
\begin{align}
   &\bigg\||\xi|^s  \widehat{u}(t,\xi) - |\xi|^s \mathcal{A}_0^k(t,\xi) \widehat{u_0}(x) - |\xi|^s\mathcal{A}_1^k(t,\xi) \widehat{u_1}(\xi)\bigg\|_{L^2}\notag\\
   &\quad\leq \left\||\xi|^s\left(\widehat{K_0}(t,\xi)- \mathcal{A}_0^k(t,\xi)\right)\chi_{\rm L}(|\xi|)\right\|_{L^2} \|u_0\|_{L^1} + \left\||\xi|^s \left(\widehat{K_1}(t,\xi)- \mathcal{A}_1^k(t,\xi)\right)\chi_{\rm L}(|\xi|)\right\|_{L^2} \|u_1\|_{L^1}\notag\\
   \vspace{0.5cm}
   &\quad\quad+ \left\||\xi|^s \widehat{u}(t,\xi) \chi_{\rm H}(|\xi|)\right\|_{L^2} + \left\||\xi|^s \mathcal{A}_0^k(t,\xi) \chi_{\rm H}(|\xi|)\widehat{u_0}(\xi)\right\|_{L^2} + \left\||\xi|^s \mathcal{A}_1^k(t,\xi) \chi_{\rm H}(|\xi|)\widehat{u_1}(\xi)\right\|_{L^2}. \label{Relation1} 
\end{align}
After applying Lemmas \ref{lemma2.5} and \ref{lemma2.6}, we gain
\begin{align}
    &\left\||\xi|^s \widehat{u}(t,\xi) \chi_{\rm H}(|\xi|)\right\|_{L^2} + \left\||\xi|^s \mathcal{A}_0^k(t,\xi) \chi_{\rm H}(|\xi|)\widehat{u_0}(\xi)\right\|_{L^2} + \left\||\xi|^s \mathcal{A}_1^k(t,\xi) \chi_{\rm H}(|\xi|)\widehat{u_1}(\xi)\right\|_{L^2}\notag\\
    &\quad\leq \left\||\xi|^s \widehat{u}(t,\xi) \chi_{\rm H}(|\xi|)\right\|_{L^2} + \left\||\xi|^s  \mathcal{A}_0^k(t,\xi)\chi_{\rm H}(|\xi|)\right\|_{L^\infty} \|u_0\|_{L^2} + \left\||\xi|^s \mathcal{A}_1^k(t,\xi)\chi_{\rm H}(|\xi|)\right\|_{L^\infty} \|u_1\|_{L^2}\notag\\
    &\quad\lesssim e^{-ct} (\|u_0\|_{H^s}+ \|u_1\|_{H^{[s-2\sigma_2]^+}}), \label{Relation2}
\end{align}
where $c$ is a suitable positive constant. 
Next, using the representations (\ref{repre1}) and (\ref{repre2}) one realizes
\begin{align*}
    &\left\||\xi|^s\left(\widehat{K_0}(t,\xi)- \mathcal{A}_0^k(t,\xi)\right)\chi_{\rm L}(|\xi|)\right\|_{L^2} \\
    &\quad\leq \left\||\xi|^s\left(\widehat{K_0^1}(t,|\xi|,1,1)-\sum_{0 \leq j+k \leq k-1}\frac{1}{j!m!}\frac{\partial^{j+m} \widehat{K_0^1}}{\partial a^j \partial b^m}(t,|\xi|, 0,0)\right)\chi_{\rm L}(|\xi|)\right\|_{L^2}\\
    &\hspace{3cm}+\left\||\xi|^s\left(\widehat{K_0^2}(t,|\xi|,1,1)-\sum_{0 \leq j+k \leq k-1}\frac{1}{j! m!}\frac{\partial^{j+m} \widehat{K_0^2}}{\partial a^j \partial b^m}(t,|\xi|, 0,0)\right)\chi_{\rm L}(|\xi|)\right\|_{L^2}
\end{align*}
and 
\vspace{0.3cm}
\begin{align*}
    &\left\||\xi|^s\left(\widehat{K_1}(t,\xi)- \mathcal{A}_1^k(t,\xi)\right)\chi_{\rm L}(|\xi|)\right\|_{L^2} \\
    &\quad\leq \left\||\xi|^s\left(\widehat{K_1^1}(t,|\xi|,1,1)-\sum_{0 \leq j+k \leq k-1}\frac{1}{j!m!}\frac{\partial^{j+m} \widehat{K_1^1}}{\partial a^j \partial b^m}(t,|\xi|, 0,0)\right)\chi_{\rm L}(|\xi|)\right\|_{L^2}\\
    &\hspace{3cm}+\left\||\xi|^s\left(\widehat{K_1^2}(t,|\xi|,1,1)-\sum_{0 \leq j+k \leq k-1}\frac{1}{j! m!}\frac{\partial^{j+m} \widehat{K_1^2}}{\partial a^j \partial b^m}(t,|\xi|, 0,0)\right)\chi_{\rm L}(|\xi|)\right\|_{L^2}.
\end{align*}
By Taylor's theorem, we arrive at
\begin{align*}
    \widehat{K_0^1}(t,|\xi|,1,1)-\sum_{0 \leq j+k \leq k-1}\frac{1}{j!m!}\frac{\partial^{j+m} \widehat{K_0^1}}{\partial a^j \partial b^m}(t,|\xi|, 0,0) &= \sum_{j+m =k } \frac{1}{j!m!} \frac{\partial^{j+m} \widehat{K_0^1}}{\partial a^j \partial b^m}(t,|\xi|, a_1,b_1),\\
    \widehat{K_0^2}(t,|\xi|,1,1)-\sum_{0 \leq j+k \leq k-1}\frac{1}{j!m!}\frac{\partial^{j+m} \widehat{K_0^2}}{\partial a^j \partial b^m}(t,|\xi|, 0,0) &= \sum_{j+m =k } \frac{1}{j!m!} \frac{\partial^{j+m} \widehat{K_0^2}}{\partial a^j \partial b^m}(t,|\xi|, a_2,b_2),\\
    \widehat{K_1^1}(t,|\xi|,1,1)-\sum_{0 \leq j+k \leq k-1}\frac{1}{j!m!}\frac{\partial^{j+m} \widehat{K_1^1}}{\partial a^j \partial b^m}(t,|\xi|, 0,0) &= \sum_{j+m =k } \frac{1}{j!m!} \frac{\partial^{j+m} \widehat{K_1^1}}{\partial a^j \partial b^m}(t,|\xi|, a_3,b_3),\\
    \widehat{K_1^2}(t,|\xi|,1,1)-\sum_{0 \leq j+k \leq k-1}\frac{1}{j!m!}\frac{\partial^{j+m} \widehat{K_1^2}}{\partial a^j \partial b^m}(t,|\xi|, 0,0) &= \sum_{j+m =k } \frac{1}{j!m!} \frac{\partial^{j+m} \widehat{K_1^2}}{\partial a^j \partial b^m}(t,|\xi|, a_4,b_4),
\end{align*}
with some $(a_h,b_h) \in [0,1] \times [0,1]$. To demonstrate the desired estimates, we will apply Proposition \ref{pro2.1} and Lemma \ref{lemma2.7} in each specific case as follows:
\begin{itemize}
 [leftmargin=*]
    \item If $\alpha = s+2(\sigma-2\sigma_1)+2k\delta$ and $\beta =2\sigma_1$, then 
    \begin{align*}
       \left\||\xi|^s \chi_{\rm L}(|\xi|) \sum_{j+m =k } \frac{1}{j!m!} \frac{\partial^{j+m} \widehat{K_0^1}}{\partial a^j \partial b^m}(t,|\xi|, a_1,b_1) \right\|_{L^2} &\lesssim \left\|e^{-c|\xi|^{2\sigma_1}t} |\xi|^{s+2(\sigma-2\sigma_1)+2k\delta}\chi_{\rm L}(|\xi|)\right\|_{L^2}\\
       &\lesssim (1+t)^{-\frac{n}{4\sigma_1}-\frac{\sigma-\sigma_1}{\sigma_1}+1-\frac{s}{2\sigma_1}-k \frac{\delta}{\sigma_1}}.
    \end{align*}
    \item If $\alpha = s+ 2k\delta$ and $\beta = 2(\sigma-\sigma_1)$, then
    \begin{align*}
        \left\||\xi|^s \chi_{\rm L}(|\xi|) \sum_{j+m =k } \frac{1}{j!m!} \frac{\partial^{j+m} \widehat{K_0^2}}{\partial a^j \partial b^m}(t,|\xi|, a_2,b_2)\right\|_{L^2} &\lesssim \left\|e^{-c|\xi|^{2(\sigma-\sigma_1)}t} |\xi|^{s+2k\delta} \chi_{\rm L}(|\xi|)\right\|_{L^2}\\
        &\lesssim (1+t)^{-\frac{n}{4(\sigma-\sigma_1)}-\frac{s}{2(\sigma-\sigma_1)}-k \frac{\delta}{\sigma-\sigma_1}}.
    \end{align*}
    \item  If $\alpha = s-2\sigma_1+2k\delta$ and $\beta = 2(\sigma-\sigma_1)$, then
    \begin{align*}
        \left\||\xi|^s\chi_{\rm L}(|\xi|)\sum_{j+m =k } \frac{1}{j!m!} \frac{\partial^{j+m} \widehat{K_1^1}}{\partial a^j \partial b^m}(t,|\xi|, a_3,b_3)\right\|_{L^2} &\lesssim \left\|e^{-c|\xi|^{2(\sigma-\sigma_1)}t} |\xi|^{s-2\sigma_1+2k\delta}\chi_{\rm L}(|\xi|)\right\|_{L^2}\\
        &\lesssim (1+t)^{-\frac{n}{4(\sigma-\sigma_1)}+\frac{\sigma_1}{\sigma-\sigma_1}-\frac{s}{2(\sigma-\sigma_1)}-k \frac{\delta}{\sigma-\sigma_1}},
    \end{align*}
    where we note that the condition $n > 4\sigma_1$ implies $\alpha > -n/2$ for all $k \geq 0$ and $s \geq 0$.
    \item If $\alpha = s-2\sigma_1+2k\delta$ and $\beta = 2\sigma_1$, then
    \begin{align*}
        \left\||\xi|^s\chi_{\rm L}(|\xi|)\sum_{j+m =k } \frac{1}{j!m!} \frac{\partial^{j+m} \widehat{K_1^2}}{\partial a^j \partial b^m}(t,|\xi|, a_4,b_4)\right\|_{L^2} &\lesssim \left\|e^{-c|\xi|^{2\sigma_1}t}|\xi|^{s-2\sigma_1+2k\delta}\chi_{\rm L}(|\xi|)\right\|_{L^2}\\
        &\lesssim (1+t)^{-\frac{n}{4\sigma_1}+1-\frac{s}{2\sigma_1}-k\frac{\delta}{\sigma_1}}.
    \end{align*}
\end{itemize}
Due to condition $n > 4\sigma_1$, we have the relation
\begin{align*}
    -\frac{n}{4(\sigma-\sigma_1)}+\frac{\sigma_1}{\sigma-\sigma_1}-\frac{s}{2(\sigma-\sigma_1)}-k \frac{\delta}{\sigma-\sigma_1} > -\frac{n}{4\sigma_1}+1-\frac{s}{2\sigma_1}-k \frac{\delta}{\sigma_1},
\end{align*}
for all $s \geq 0$ and $k \geq 0$. Combining this with (\ref{Relation1}) and (\ref{Relation2}), we conclude the estimate (\ref{UpperEs2.1}).\\
 To prove the estimate (\ref{LowerEs2.1}), we can proceed as follows:
    \begin{align}
        &\bigg\|  |\xi|^s\widehat{u}(t,\xi) - |\xi|^s \mathcal{A}_0^k(t,\xi)\widehat{u_0}(\xi) - |\xi|^s \mathcal{A}_1^k(t,\xi) \widehat{u_1}(\xi)\bigg\|_{L^2}\notag\\
        &\quad \geq \left\||\xi|^s\sum_{j+m = k} \frac{1}{j!m!}\frac{\partial^{j+m} \widehat{K_1^1}}{\partial a^j \partial b^m} (t,|\xi|, 0 ,0) \widehat{u_1}(\xi)\right\|_{L^2}\notag\\
        &\qquad - \left\||\xi|^s\sum_{j+m = k} \frac{1}{j! m!}\frac{\partial^{j+m} \widehat{K_1^2}}{\partial a^j \partial b^m} (t,|\xi|, 0 ,0) \widehat{u_1}(\xi)\right\|_{L^2} \notag\\
        &\qquad- \left\||\xi|^s\sum_{j+m = k} \frac{1}{j!m!}\left(\frac{\partial^{j+m} \widehat{K_0^1}}{\partial a^j \partial b^m} (t,|\xi|, 0 ,0) - \frac{\partial^{j+m} \widehat{K_0^2}}{\partial a^j \partial b^m} (t,|\xi|, 0 ,0)\right)\widehat{u_0}(\xi)\right\|_{L^2}\notag\\
        &\qquad- \bigg\||\xi|^s  \widehat{u}(t,\xi) - |\xi|^s\mathcal{A}_0^{k+1}(t,\xi)\widehat{u_0}(\xi) - |\xi|^s \mathcal{A}_1^{k+1}(t,\xi) \widehat{u_1}(\xi)\bigg\|_{L^2}\notag\\
        &\qquad =: \mathcal{J}_1(t) - \mathcal{J}_2(t)-\mathcal{J}_3(t)-\mathcal{J}_4(t).\label{Main.Re4.1}
    \end{align}
    Using the estimate (\ref{UpperEs2.1}), we obtain 
    \begin{align}
        \mathcal{J}_4(t) &\lesssim (1+t)^{-\frac{n}{4(\sigma-\sigma_1)}-\frac{s}{2(\sigma-\sigma_1)}+\frac{\sigma_1}{\sigma-\sigma_1}-(k+1)\frac{\delta}{\sigma-\sigma_1}} \|(u_0, u_1)\|_{\mathcal{D}_s}\notag\\
        &= o\left( t^{-\frac{n}{4(\sigma-\sigma_1)}-\frac{s}{2(\sigma-\sigma_1)}+\frac{\sigma_1}{\sigma-\sigma_1}-k\frac{\delta}{\sigma-\sigma_1}}\right),\label{Re4.1.0}
    \end{align}
    as $t \to \infty$. Again, applying Proposition \ref{pro2.1} linked to Young's convolution inequality one has
    \begin{align}
        \mathcal{J}_3(t) = &\left\||\xi|^s\sum_{j+m = k} \frac{1}{j!m!} \left(\frac{\partial^{j+m} \widehat{K_0^1}}{\partial a^j \partial b^m} (t,|\xi|, 0 ,0) - \frac{\partial^{j+m} \widehat{K_0^2}}{\partial a^j \partial b^m} (t,|\xi|, 0 ,0)\right)\widehat{u_0}(\xi)\right\|_{L^2}\notag\\
        &\qquad\lesssim \left(\left\|e^{-c|\xi|^{2\sigma_1}t} |\xi|^{s+2(\sigma-2\sigma_1)+2k\delta}\right\|_{L^2} +\left\|e^{-c|\xi|^{2(\sigma-\sigma_1)}t} |\xi|^{s+2k\delta}\right\|_{L^2} \right)\|u_0\|_{L^1}\notag\\
        &\qquad\lesssim t^{\max\{-\frac{n}{4\sigma_1}-\frac{s}{2\sigma_1}-\frac{\sigma-2\sigma_1}{\sigma_1} -k \frac{\delta}{\sigma_1}, \,-\frac{n}{4(\sigma-\sigma_1)}-\frac{s}{2(\sigma-\sigma_1)}-k \frac{\delta}{\sigma-\sigma_1}\}} \|u_0\|_{L^1}\notag\\
        &\qquad = o\left(t^{-\frac{n}{4(\sigma-\sigma_1)}-\frac{s}{2(\sigma-\sigma_1)}+\frac{\sigma_1}{\sigma-\sigma_1}-k\frac{\delta}{\sigma-\sigma_1}}\right) \label{Re4.1.1},
    \end{align}
    as $t \to \infty$, where $c$ is a suitable positive constant. Similarly, we can also estimate that
    \begin{align}
         \mathcal{J}_2(t) = \left\||\xi|^s\sum_{j+m = k} \frac{1}{j!m!}\frac{\partial^{j+m} \widehat{K_1^2}}{\partial a^j \partial b^m} (t,|\xi|, 0 ,0) \widehat{u_1}(\xi)\right\|_{L^2} &\lesssim t^{-\frac{n}{4\sigma_1}-\frac{s}{2\sigma_1} +1-k \frac{\delta}{\sigma_1}} \|u_1\|_{L^1} \notag\\
        &= o\left(t^{-\frac{n}{4(\sigma-\sigma_1)}-\frac{s}{2(\sigma-\sigma_1)}+\frac{\sigma_1}{\sigma-\sigma_1}-k\frac{\delta}{\sigma-\sigma_1}}\right) \label{Re4.1.2}.
    \end{align}
    Finally, for the term $\mathcal{J}_1(t)$ with condition $P_1 \neq 0$, one sees
    \begin{align}
    \mathcal{J}_1(t) = &\left\||\xi|^s\sum_{j+m = k} \frac{1}{j!m!}\frac{\partial^{j+m} \widehat{K_1^1}}{\partial a^j \partial b^m} (t,|\xi|, 0 ,0) \widehat{u_1}(\xi)\right\|_{L^2} \notag\\
        &\qquad \geq \left|P_1 \right| \left\||\xi|^s\sum_{j+m = k} \frac{1}{j!m!}\frac{\partial^{j+m} \widehat{K_1^1}}{\partial a^j \partial b^m} (t,|\xi|, 0 ,0)\right\|_{L^2}\notag\\
        &\hspace{3cm}- \left\||\xi|^s\sum_{j+m = k}  \frac{1}{j!m!}\frac{\partial^{j+m}\widehat{K_1^1}}{\partial a^j \partial b^m} (t,|\xi|, 0 ,0) \left(\widehat{u_1}(\xi) - P_1\right)\right\|_{L^2}. \label{Re4.1.3}
    \end{align}
    Using again Proposition \ref{pro2.1}, we get
    \begin{align*}
        &\left\||\xi|^s\sum_{j+m = k} \frac{1}{j!m!}\frac{\partial^{j+m} \widehat{K_1^1}}{\partial a^j \partial b^m} (t,|\xi|, 0 ,0)\right\|_{L^2}\\
        &\qquad= \left\|\sum_{j+m = k} \frac{1}{j!m!}e^{-|\xi|^{2(\sigma-\sigma_1)}t} |\xi|^{s-2\sigma_1+ 2j(\sigma_2-\sigma_1)+2m(\sigma-2\sigma_1)} \sum_{h=0}^{j+m} C_{3,h,j,m}^{*}(|\xi|^{2(\sigma-\sigma_1)}t)^h\right\|_{L^2}.
    \end{align*}
    Without loss of generality, we assume that $\sigma_2 +\sigma_1 < \sigma $, that is, $\delta = \sigma_2-\sigma_1$. For this reason, it entails
   
        \begin{align}
            &\left\||\xi|^s\sum_{j+m = k} \frac{1}{j!m!} \frac{\partial^{j+m} \widehat{K_1^1}}{\partial a^j \partial b^m} (t,|\xi|, 0 ,0)\right\|_{L^2}\notag\\
            &\quad \geq \frac{1}{k!}\left\|e^{-|\xi|^{2(\sigma-\sigma_1)}t} |\xi|^{s-2\sigma_1+ 2k\delta} \sum_{h=0}^{k} C_{3,h,k, 0}^{*}(|\xi|^{2(\sigma-\sigma_1)}t)^h\right\|_{L^2} \notag\\
            &\qquad - \sum_{j+m = k, m \neq 0} \frac{1}{j!m!}\left\|e^{-|\xi|^{2(\sigma-\sigma_1)}t} |\xi|^{s-2\sigma_1+ 2j(\sigma_2-\sigma_1)+2m(\sigma-2\sigma_1)} \sum_{h=0}^{j+m} C_{3,h,j,m}^{*}(|\xi|^{2(\sigma-\sigma_1)}t)^h\right\|_{L^2}\notag\\
            &\quad \gtrsim t^{-\frac{n}{4(\sigma-\sigma_1)}-\frac{s}{2(\sigma-\sigma_1)}+\frac{\sigma_1}{\sigma-\sigma_1}-k\frac{\delta}{\sigma-\sigma_1}} - o\left(t^{-\frac{n}{4(\sigma-\sigma_1)}-\frac{s}{2(\sigma-\sigma_1)}+\frac{\sigma_1}{\sigma-\sigma_1}-k\frac{\delta}{\sigma-\sigma_1}}\right), \label{Re4.1.4}
        \end{align}
    where we have utilized the estimate
    \begin{align*}
        &\left\|e^{-|\xi|^{2(\sigma-\sigma_1)}t} |\xi|^{s-2\sigma_1+ 2j(\sigma_2-\sigma_1)+2m(\sigma-2\sigma_1)} \sum_{h=0}^{j+m} C_{3,h,j,m}^{*}(|\xi|^{2(\sigma-\sigma_1)}t)^h\right\|_{L^2}\\
        &\qquad = C_{h,j,m} t^{-\frac{n}{4(\sigma-\sigma_1)}-\frac{s}{2(\sigma-\sigma_1)}+\frac{\sigma_1}{\sigma-\sigma_1}-j \frac{\sigma_2-\sigma_1}{\sigma-\sigma_1}- m \frac{\sigma-2\sigma_1}{\sigma-\sigma_1}}
    \end{align*}
    for all $s \geq 0$ and $j,m \in \mathbb{N}$. From this, applying Lemma \ref{L^1.Lemma} we derive
    \begin{align*}
        &\left\||\xi|^s\sum_{j+m = k} \frac{1}{j!m!}\frac{\partial^{j+m} \widehat{K_1^1}}{\partial a^j \partial b^m} (t,|\xi|, 0 ,0) \left(\widehat{u_1}(\xi) - P_1\right)\right\|_{L^2} \\
        &\qquad= 
        \bigg\||D|^s \mathfrak{F}_{\xi\to x}^{-1}\bigg(\sum_{j+m = k} \frac{1}{j!m!}\frac{\partial^{j+m} \widehat{K_1^1}}{\partial a^j \partial b^m} (t,|\xi|, 0 ,0)\bigg) \ast_x u_1(x)\\
        &\hspace{4cm}- P_1 |D|^s \mathfrak{F}_{\xi\to x}^{-1}\bigg(\sum_{j+m = k} \frac{1}{j!m!}\frac{\partial^{j+m} \widehat{K_1^1}}{\partial a^j \partial b^m} (t,|\xi|, 0 ,0)\bigg) \bigg\|_{L^2}
        \\
        &\qquad= o\left(t^{-\frac{n}{4(\sigma-\sigma_1)}-\frac{s}{2(\sigma-\sigma_1)}+\frac{\sigma_1}{\sigma-\sigma_1}-k\frac{\delta}{\sigma-\sigma_1}}\right),
    \end{align*}
    as $t \to \infty$.
    Combining this with the relations (\ref{Re4.1.3}) and (\ref{Re4.1.4}), we have
    \begin{align}
        \mathcal{J}_1(t) \gtrsim t^{-\frac{n}{4(\sigma-\sigma_1)}-\frac{s}{2(\sigma-\sigma_1)}+\frac{\sigma_1}{\sigma-\sigma_1}-k\frac{\delta}{\sigma-\sigma_1}}, \label{Re4.1.5}
    \end{align}
    for $t \geq 1$. From the relations (\ref{Main.Re4.1})-(\ref{Re4.1.2}) and (\ref{Re4.1.5}), we may conclude the estimate (\ref{LowerEs2.1}). In summary, Theorem \ref{Linear_Asym} has been proved.
\end{proof}

%.........................................
\begin{proof}[\textbf{Proof of Theorem \ref{Linear_Asym_1}}]
In the special case $\sigma_1 = 0$, let us express the characteristic root $\lambda_2(|\xi|)$ as follows:
\begin{align*}
   \lambda_{2}(|\xi|) = \frac{1}{2}\left(-1 - |\xi|^{2\sigma_2} - \sqrt{(1 + |\xi|^{2\sigma_2})^{2} - 4 |\xi|^{2\sigma}}\right).
\end{align*}
and
\begin{align*}
    \lambda_1(|\xi|)-\lambda_2(|\xi|) = \sqrt{(1 + |\xi|^{2\sigma_2})^{2} - 4 |\xi|^{2\sigma}}.
\end{align*}
Thus, it immediately yields
\begin{align*}
    \left\|\widehat{K_0^1}(t,\xi)\right\|_{L^\infty} \lesssim e^{-ct} &\text{ and } \left\|\widehat{K_1^2}(t,\xi)\right\|_{L^\infty} \lesssim e^{-ct}
\end{align*}
for all $t \geq 1$, where $c$ is a suitable positive constant. On the other hand, the fact is that
\begin{align*}
    &\bigg\||\xi|^s  \widehat{u}(t,\xi) - |\xi|^s \mathcal{B}_0^k(t,\xi) \widehat{u_0}(\xi) - |\xi|^s\mathcal{B}_1^k(t,\xi) \widehat{u_1}(\xi)\bigg\|_{L^2}\\
    &\quad\leq \left\||\xi|^s\left(\widehat{K_0^2}(t,\xi)- \mathcal{B}_0^k(t,\xi)\right)\chi_{\rm L}(|\xi|)\right\|_{L^2} \|u_0\|_{L^1} + \left\||\xi|^s \left(\widehat{K_1^1}(t,\xi)- \mathcal{B}_1^k(t,\xi)\right)\chi_{\rm L}(|\xi|)\right\|_{L^2} \|u_1\|_{L^1}\\
    &\quad\quad+  \left\|\widehat{K_0^1}(t,\xi) \chi_{\rm L}(|\xi|)\right\|_{L^\infty} \|u_0\|_{L^2} + \left\|\widehat{K_1^2}(t,\xi) \chi_{\rm L}(|\xi|)\right\|_{L^\infty} \|u_1\|_{L^2}\\
    &\quad\quad+ \left\||\xi|^s \widehat{u}(t,\xi) \chi_{\rm H}(|\xi|)\right\|_{L^2} + \left\||\xi|^s \mathcal{B}_0^k(t,\xi)\chi_{\rm H}(|\xi|)\right\|_{L^\infty} \|u_0\|_{L^2} + \left\||\xi|^s \mathcal{B}_1^k(t,\xi)\chi_{\rm H}(|\xi|)\right\|_{L^\infty} \|u_1\|_{L^2}.
\end{align*}
At this point, the terms
\begin{align*}
    \left\||\xi|^s\left(\widehat{K_0^2}(t,\xi)- \mathcal{B}_0^k(t,\xi)\right)\chi_{\rm L}(|\xi|)\right\|_{L^2} \text{ and } \left\||\xi|^s\left(\widehat{K_1^1}(t,\xi)- \mathcal{B}_1^k(t,\xi)\right)\chi_{\rm L}(|\xi|)\right\|_{L^2}
\end{align*}
have been estimated as in the proof of Theorem \ref{Linear_Asym} by the replacement of $\delta = \sigma_2$ if $\sigma_1 = 0$. From Lemma \ref{lemma2.5} and Lemma \ref{lemma2.6}, we may immediately conclude
\begin{align*}
    &\left\||\xi|^s \widehat{u}(t,\xi) \chi_{\rm H}(|\xi|)\right\|_{L^2} + \left\||\xi|^s \mathcal{B}_0^k(t,\xi)\chi_{\rm H}(|\xi|)\right\|_{L^\infty} \|u_0\|_{L^2} + \left\||\xi|^s \mathcal{B}_1^k(t,\xi) \chi_{\rm H}(|\xi|)\right\|_{L^\infty} \|u_1\|_{L^2} \\
    &\hspace{4cm} \lesssim e^{-ct} (\|u_0\|_{H^s} + \|u_1\|_{H^{[s-2\sigma_2]^+}}).
\end{align*}
Hence, we obtain estimate (\ref{UpperEs2.2}). By performing proof steps similar to the proof of Theorem \ref{Linear_Asym}, we obtain (\ref{LowerEs2.2}).
Therefore, the proof of Theorem \ref{Linear_Asym_1} has been completed.
\end{proof}
%.................................................

%====================================================================
\section*{Acknowledgements}
This research is funded by Vietnam National Foundation for Science and Technology Development (NAFOSTED) under grant number 101.02-2023.12. The authors wish to thank Hiroshi Takeda (Fukuoka University) for giving them helpful advice to improve this paper, in particular, to establish the lower bound estimates \eqref{LowerEs2.1} and \eqref{LowerEs2.2} in Theorems \ref{Linear_Asym} and \ref{Linear_Asym_1}, respectively.

%=================================================================================={References}

\end{document}